\theoremstyle{plain}
\newtheorem{teo}{Theorem}[section]
\newtheorem{lemma}[teo]{Lemma}
\newtheorem{cor}[teo]{Corollary}
\newtheorem{ackn}{Acknowledgments\!}
\theoremstyle{definition}
\theoremstyle{remark}
\newtheorem{rem}[teo]{Remark}
\numberwithin{equation}{section}
\def\kn{\mathbin{\bigcirc\mkern-15mu\wedge}}
\def\SS{{{\mathbb S}}}
\def\RR{{\mathbb R}}
\def\gt{\widetilde{g}}
\def\eps{\varepsilon}
\def\gt{\widetilde{g}}
\def\gb{\overline{g}}
\def\vphi{\varphi}
\newcommand{\set}[1]{{\left\{#1\right\}}}               
\newcommand{\pa}[1]{{\left(#1\right)}}                  
\newcommand{\sq}[1]{{\left[#1\right]}}                  
\newcommand{\abs}[1]{{\left|#1\right|}}                 
\title[Metrics of constant negative scalar-Weyl curvature]{Metrics of constant negative scalar-Weyl curvature}
\author[Giovanni Catino]{Giovanni Catino}
\address[Giovanni Catino]{Dipartimento di Matematica, Politecnico di Milano, Piazza Leonardo da Vinci 32, 20133 Milano, Italy}
\email[]{giovanni.catino@polimi.it}
\begin{document}

\begin{abstract} Extending Aubin's construction of metrics with constant negative scalar curvature, we prove that every $n$-dimensional closed manifold admits a Riemannian metric with constant negative scalar-Weyl curvature, that is $R+t|W|, t\in\mathbb{R}$. In particular, there are no topological obstructions for metrics with $\varepsilon$-pinched Weyl curvature and negative scalar curvature.
\end{abstract}

\maketitle

%
%
%
%

\section{Introduction}

A natural problem in Riemannian geometry is to understand the relation between curvature and topology of the underlying manifold. Given a smooth $n$-dimensional manifold $M$, $n\geq 3$, the curvature tensor of  a Riemannian metric $g$ on $M$ can be decomposed in its Weyl, Ricci and scalar curvature part, that is
$$
Riem_g = W_g+\frac{1}{n-2} Ric_g \kn g - \frac{R_g}{2(n-1)(n-2)}g\kn g,
$$
where $\kn$ is the Kulkarni-Nomizu product. It is common knowledge that {\em weak positive} curvature conditions, such as positive scalar curvature $R_g$ \cite{schyau, grolaw}, or {\em strong negative}  ones, such as negative sectional curvature, are in general obstructed. On the other hand, Aubin in \cite{aub1, aub2} showed that, on every smooth $n$-dimensional closed (compact with empty boundary) manifold, there exists a smooth Riemannian metric with  constant negative scalar curvature, $R_g\equiv -1$. This result was extended to the complete, non-compact, case by Bland and Kalka in \cite{blakal}. In particular, there are no topological obstructions for negative scalar curvature metrics. Actually, a much stronger result is known: Lohkamp in \cite{loh} proved that every smooth  $n$-dimensional complete manifold admits a complete smooth Riemannian metric with (strictly) negative Ricci curvature, $Ric_g<0$ (the three dimensional case was considered in \cite{gaoyau, bro}). 

By virtue of the Riemann components, in dimension $n\geq 4$, it is natural to ask if there are unobstructed curvature conditions which involves the Weyl curvature. To the best of our knowledge, the first result in this direction was proved by Aubin \cite{aub2}, who constructed a metric with non-zero Weyl curvature on every closed $n$-dimensional manifold. As a consequence, in \cite{cmmp} the authors proved the existence of a canonical metric (weak harmonic Weyl)  whose Weyl tensor satisfies a second order Euler-Lagrange PDE, on every given closed four-manifold. 

In \cite{gur1}, Gursky studied a variant of the Yamabe problem related to a modified scalar curvature given by 
$$
R_g+t|W_g|_g,\quad t\in\RR,
$$
where $|W_g|_g$ denotes the norm of the Weyl curvature of $g$. We will refer to this quantity as the {\em scalar-Weyl curvature} (see Section \ref{s-swc}). Constant scalar-Weyl curvature metrics naturally arise as critical points in the conformal class of the modified Einstein-Hilbert functional
$$
g\longmapsto \text{Vol}_g(M)^{-\frac{n-2}{2}}\int_M \pa{R_g+t|W_g|_g}\,dV_g.
$$
It is clear that positive scalar-Weyl curvature metrics are obstructed, at least for $t\leq 0$, and naturally we may ask what we can say concerning the negative regime. In this paper we prove the following existence result:

\begin{teo}\label{t-main} On every smooth $n$-dimensional closed manifold $M$, for every $t\in\RR$, there exists a smooth Riemannian metric $g=g_{t}$ with 
$$
R_g+t|W_g|_g\equiv -1\quad\text{on } M.
$$
In particular, there are no topological obstructions for negative scalar-Weyl curvature metrics.
\end{teo}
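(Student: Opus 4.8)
The plan is to follow Aubin's strategy for the negative Yamabe problem, with the scalar curvature replaced throughout by the scalar-Weyl curvature. The starting point is the observation that $R_g+t|W_g|_g$ transforms, under a conformal change $\hat g=u^{\frac{4}{n-2}}g$ with $u>0$, exactly like the potential of the conformal Laplacian: since the Weyl tensor as a $(0,4)$-tensor satisfies $W_{\hat g}=u^{\frac{4}{n-2}}W_g$ one has $|W_{\hat g}|_{\hat g}=u^{-\frac{4}{n-2}}|W_g|_g$, and together with the classical law $R_{\hat g}=u^{-\frac{n+2}{n-2}}\big(-\frac{4(n-1)}{n-2}\Delta_g u+R_g u\big)$ this gives
$$
R_{\hat g}+t|W_{\hat g}|_{\hat g}=u^{-\frac{n+2}{n-2}}\Big(-\tfrac{4(n-1)}{n-2}\Delta_g u+(R_g+t|W_g|_g)\,u\Big).
$$
Hence imposing $R_{\hat g}+t|W_{\hat g}|_{\hat g}\equiv-1$ is equivalent to solving, for a fixed background metric $g$, the equation
$$
-\tfrac{4(n-1)}{n-2}\Delta_g u+(R_g+t|W_g|_g)\,u=-u^{\frac{n+2}{n-2}},\qquad u>0 ,
$$
that is, the negative-type Yamabe equation for the modified operator $L_g^t:=-\frac{4(n-1)}{n-2}\Delta_g+(R_g+t|W_g|_g)$.

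The second step is solvability of this equation, which falls in the favorable case. By the standard sub/supersolution method a positive smooth solution exists whenever the first eigenvalue $\lambda_1(L_g^t)$ is negative: a large constant is a supersolution, a small multiple of the positive first eigenfunction of $L_g^t$ is a subsolution, and monotone iteration together with elliptic bootstrap produce a smooth $u>0$. Thus Theorem~\ref{t-main} reduces to the purely constructive statement: \emph{on every closed $M^n$, for every $t\in\mathbb{R}$, there is a metric $g$ with $\lambda_1(L_g^t)<0$.}

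For $t\le 0$ this is immediate from Aubin: take his metric with $R_g\equiv-1$; then $R_g+t|W_g|_g\le-1$ pointwise, so testing $L_g^t$ with the constant function already gives $\lambda_1(L_g^t)\le-1<0$. The case $t>0$ is where Aubin's construction must be extended, and the idea is to perform his deformation \emph{inside a locally conformally flat region}. Starting from an arbitrary metric $g_0$, first deform it on a small geodesic ball $B_\delta(p)$ so that it becomes rotationally symmetric — say flat — on $B_{\delta/2}(p)$, the change being confined to the collar $B_\delta(p)\setminus B_{\delta/2}(p)$, which is left untouched afterwards. Then, exactly as in Aubin, replace the metric on $B_{\delta/2}(p)$ by a rotationally symmetric metric $dr^2+\varphi(r)^2 g_{S^{n-1}}$ agreeing with the flat one near $\partial B_{\delta/2}(p)$ and whose profile $\varphi>0$ creates, on a sub-interval, scalar curvature so negative that the Dirichlet first eigenvalue of the conformal Laplacian of this warped product on $B_{\delta/2}(p)$ is strictly negative. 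Any rotationally symmetric metric is locally conformally flat, so $W_g\equiv0$ on $B_{\delta/2}(p)$; therefore the very test function $u$ supported in $B_{\delta/2}(p)$ witnessing negativity of $\lambda_1$ of the conformal Laplacian there also witnesses $\lambda_1(L_g^t)<0$, for \emph{every} $t\in\mathbb{R}$, because $|W_g|_g$ vanishes on the support of $u$. Combining with the second step yields $\hat g=u^{\frac{4}{n-2}}g$ with $R_{\hat g}+t|W_{\hat g}|_{\hat g}\equiv-1$.

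The main obstacle is this last step — making Aubin's profile $\varphi$ deliver a sufficiently deep "neck" while the metric stays rotationally symmetric (hence Weyl-flat) on the relevant ball: one must verify that a warped-product metric on $B_{\delta/2}(p)$, matching the flat metric smoothly near the boundary with $\varphi>0$ throughout, can be arranged so that $\int\big(\tfrac{4(n-1)}{n-2}|\nabla u|^2+R\,u^2\big)\,dV<0$ for some compactly supported $u$, i.e.\ that the corresponding ODE admits such a profile. Once Aubin's construction is granted, the rest is the classical negative-Yamabe machinery, and the term $t|W_g|_g$ is genuinely inert because it is supported away from the region where the test function lives.
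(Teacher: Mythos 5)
Your first two steps are sound: the conformal covariance of $F_g=R_g+t|W_g|_g$ is exactly \eqref{e-confF}, and the reduction of the theorem to producing, for each $t$, a background metric whose modified conformal Laplacian has negative quadratic form somewhere is essentially the content of Lemmas \ref{l-g1}, \ref{l-g2} and \ref{l-a1}; the case $t\le 0$ is handled as in the paper. The gap is in your third step, and it is fatal as written. You propose to make the metric rotationally symmetric (hence with $W\equiv 0$) on a ball $B$ and to exhibit a test function $u$ \emph{compactly supported} in $B$ with $\int_B\big(\tfrac{4(n-1)}{n-2}|\nabla u|^2+R\,u^2\big)\,dV<0$. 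No such metric and test function exist. A rotationally symmetric metric $dr^2+\varphi(r)^2g_{\SS^{n-1}}$ on a ball is not merely locally conformally flat: the substitution $s=\exp\big(\int^r d\tau/\varphi(\tau)\big)$ identifies it \emph{globally} conformally with a flat ball. Since the quadratic form of the conformal Laplacian transforms as $Q_{\tilde g}(\phi)=Q_g(\phi v)$ under $\tilde g=v^{4/(n-2)}g$, and $\phi\mapsto \phi v$ preserves compact supports, the existence of a compactly supported $\phi$ with $Q(\phi)<0$ is a conformal invariant of the domain; for the flat ball $Q_\delta(w)=\tfrac{4(n-1)}{n-2}\int|\nabla w|^2\ge 0$, so $Q_g(u)>0$ for every nonzero $u\in C^\infty_c(B)$, no matter how the profile $\varphi$ is chosen. (Compare: a closed hyperbolic manifold has $\lambda_1(\mathcal{L}_g)<0$, yet no test function compactly supported in a small geodesic ball can detect this.) Negativity of $\lambda_1$ cannot be created, nor witnessed, inside a simply connected conformally flat region by a Dirichlet test function.

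This is not a technicality but the crux of the problem: forcing $W\equiv 0$ on the support of the test function forces the metric there to be conformally flat, which forces the conformal Laplacian to be positive there, so the term $t|W_g|_g$ can never be made ``inert'' in the way you intend. The paper resolves the tension differently: Aubin's deformation $g\mapsto \psi g+d(k\psi)\otimes d(k\psi)$ is paired with the test function $u=(1+|\nabla (k\psi)|^2)^{-1/4}$, which equals $1$ outside the balls and is \emph{not} compactly supported, and the Weyl curvature generated by the deformation inside the balls --- the error term $E_g(2k\sqrt{\psi})$ of Lemma \ref{l-a0} --- is not made to vanish but is shown in the key Lemma \ref{l-kest} to remain bounded in the norm of $\gb$ as $k\to\infty$, while the scalar curvature term yields a gain of order $r^{-2}$ per unit volume that overwhelms everything else after summing over many disjoint balls. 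Any repair of your localization idea must therefore abandon the compactly supported test function, accept a fixed positive contribution such as $t\int_M|W_g|_g\,dV_g$ from outside the deformed region, and beat it by the accumulated local gain --- which lands you essentially in the paper's Steps 3--5. A minor separate point: since $|W_g|_g$ is only Lipschitz where $W_g$ vanishes, elliptic bootstrap gives $u\in C^{2,\alpha}$ rather than $C^\infty$; the paper addresses this with a density argument at the end of its proof.
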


\begin{rem} In dimension four, Theorem \ref{t-main} was proved also by Seshadri in \cite{ses}. We observe that his proof cannot be trivially generalized to higher dimension, since it is based on the existence of a hyperbolic metric on a knot complement of $\SS^3$.
\end{rem}

It is well known that there are obstructions for the existence of metrics with zero Weyl curvature. On the other hand, choosing $t=1/\sqrt{\eps}$, $\eps>0$,  in Theorem \ref{t-main} we obtain the following existence result for metrics with $\eps$-pinched Weyl curvature and negative scalar curvature:

\begin{cor} On every smooth $n$-dimensional closed manifold, for every $\varepsilon>0$, there exists a smooth Riemannian metric $g=g_{\varepsilon}$ with 
$$
R_g<0 \quad\text{and}\quad |W_g|_g^2 <\varepsilon R_g^2\quad\text{on } M.
$$
\end{cor}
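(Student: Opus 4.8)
The plan is to deduce the statement directly from Theorem \ref{t-main} by a judicious choice of the parameter $t$. Given $\varepsilon > 0$, I would invoke Theorem \ref{t-main} with $t = 1/\sqrt{\varepsilon} > 0$, which produces a smooth Riemannian metric $g = g_\varepsilon$ on $M$ satisfying
\[
R_g + \frac{1}{\sqrt{\varepsilon}}\,|W_g|_g \equiv -1 \qquad \text{on } M.
\]
Everything after this is pointwise and elementary.

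The next step is the algebraic manipulation of this identity. Since $|W_g|_g \geq 0$ everywhere, the displayed equation forces $R_g = -1 - \tfrac{1}{\sqrt{\varepsilon}}\,|W_g|_g \leq -1 < 0$, so $R_g$ is strictly negative (in fact bounded above by $-1$). Rearranging, $\tfrac{1}{\sqrt{\varepsilon}}\,|W_g|_g = -1 - R_g = |R_g| - 1 < |R_g|$ at each point, whence $|W_g|_g < \sqrt{\varepsilon}\,|R_g|$; squaring yields $|W_g|_g^2 < \varepsilon\, R_g^2$, which is precisely the $\varepsilon$-pinching claimed.

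As for the main obstacle: at the level of this corollary there is essentially none, since all the geometric and analytic difficulty is already absorbed into Theorem \ref{t-main} (the Aubin-type construction adapted to the scalar-Weyl curvature). The only subtlety worth a remark is that the inequality is genuinely strict even where the Weyl tensor degenerates: this is because $|R_g| \geq 1$, so $|R_g| - 1 < |R_g|$ holds strictly regardless of whether $W_g$ vanishes at some points. One should also note that this argument gives nothing for $\varepsilon = 0$, in accordance with the known topological obstructions to locally conformally flat metrics.
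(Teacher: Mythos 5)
Your proposal is correct and coincides with the paper's argument: the paper likewise obtains the corollary by applying Theorem \ref{t-main} with $t=1/\sqrt{\varepsilon}$, and the pointwise algebra you give (namely $R_g\leq -1<0$ and $\tfrac{1}{\sqrt{\varepsilon}}|W_g|_g=|R_g|-1<|R_g|$) is exactly the intended, and correct, deduction. Nothing is missing.
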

The interesting notion of {\em isotropic curvature} was introduced  by Micallef and Moore in \cite{micmoo}:  $(M,g)$ has positive (or negative) isotropic curvature if and only if the curvature tensor of $g$ satisfies
$$
R_{1313}+R_{1414}+R_{2323}+R_{2424}-2R_{1234}>0\quad(\text{or } <0)
$$
for all orthonormal $4$-frames $\{e_1,e_2,e_3,e_4\}$. In \cite{micmoo}, using minimal surfaces, the author proved that any closed simply connected manifold with positive isotropic curvature is homeomorphic to the sphere $\SS^n$.
As already observed in \cite{ses}, in dimension four, metrics with negative scalar-Weyl curvature for $t\geq 6$ have negative isotropic curvature. In particular, Theorem \ref{t-main} implies the following:

\begin{cor}  On every smooth four-dimensional orientable closed manifold there exists a smooth Riemannian metric  with negative isotropic curvature.
\end{cor}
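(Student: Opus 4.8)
The corollary will follow by applying Theorem~\ref{t-main} with $t=6$ and then making explicit, in dimension four, the inequality relating the scalar-Weyl curvature to the isotropic curvature. So the plan is: first fix a smooth orientable closed four-manifold $M$ and apply Theorem~\ref{t-main} with $t=6$, obtaining a smooth metric $g=g_6$ with $R_g+6|W_g|_g\equiv-1$ on $M$; in particular $R_g<0$ everywhere on $M$.

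Next I would compute, at an arbitrary point and for an arbitrary orthonormal $4$-frame $\{e_1,e_2,e_3,e_4\}$, the isotropic curvature
$$
\mathcal{I}:=R_{1313}+R_{1414}+R_{2323}+R_{2424}-2R_{1234}
$$
by inserting the decomposition $Riem_g=W_g+\frac{1}{n-2}Ric_g\kn g-\frac{R_g}{2(n-1)(n-2)}g\kn g$. Expanding the Kulkarni-Nomizu products in the orthonormal frame, every term containing an off-diagonal component of $g$ drops out; in particular $R_{1234}=W_{1234}$. Collecting the contributions of the four sectional terms and using $n=4$, so that the four indices exhaust an orthonormal basis, the Ricci parts add up to $R_g$ and the scalar parts to $-\frac{4R_g}{(n-1)(n-2)}=-\frac{2}{3}R_g$, so that
$$
\mathcal{I}=\frac{R_g}{3}+\big(W_{1313}+W_{1414}+W_{2323}+W_{2424}-2W_{1234}\big).
$$
It then remains to bound $\mathcal{W}:=W_{1313}+W_{1414}+W_{2323}+W_{2424}-2W_{1234}$. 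Since $\mathcal{W}$ is a fixed linear functional of the components of $W_g$ in the frame, one has $|\mathcal{W}|\le C\,|W_g|_g$ for a universal constant $C$, and a direct algebraic computation in dimension four (carried out in \cite{ses}, and where the self-dual / anti-self-dual splitting of $W_g$, hence the orientation, enters) shows that $C=2$ is admissible. Granting this, and using $R_g+6|W_g|_g\equiv-1$,
$$
\mathcal{I}\le\frac{R_g}{3}+2|W_g|_g=\frac{1}{3}\big(R_g+6|W_g|_g\big)=-\frac{1}{3}<0
$$
for every orthonormal $4$-frame at every point, i.e.\ $g$ has negative isotropic curvature.

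I expect the one genuine point of the argument to be the sharp estimate $|\mathcal{W}|\le 2|W_g|_g$ — equivalently, the verification that the threshold $t=6$ is admissible — this being also the step that really uses $\dim M=4$; the rest is a routine unwinding of the curvature decomposition, and it is consistent that the coefficient $6=3\cdot2$ exactly cancels the factor $\tfrac13$ in front of $R_g$ produced by the trace computation, making the right-hand side of the last display the negative constant provided by Theorem~\ref{t-main}.
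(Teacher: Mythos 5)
Your proposal is correct and follows essentially the same route as the paper: apply Theorem~\ref{t-main} with $t=6$ and combine it with the pointwise inequality from \cite{ses} asserting that $R_g+6|W_g|_g<0$ forces negative isotropic curvature in dimension four. The extra algebra you supply (the reduction $\mathcal{I}=\tfrac{R_g}{3}+\mathcal{W}$ via the curvature decomposition) checks out, and the one step you defer to \cite{ses} — the bound $\mathcal{W}\le 2|W_g|_g$ via the self-dual/anti-self-dual splitting — is exactly the step the paper also delegates to that reference.
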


\

\section{The scalar-Weyl curvature}\label{s-swc}

In this section we briefly recall the variational and conformal aspects of the scalar-Weyl curvature, firstly studied by Gursky in \cite{gur1}. Let $(M, g)$ be a $n$-dimensional closed (compact with empty boundary) Riemannian manifold. First we recall that the conformal Laplacian is the operator
$$
\mathcal{L}_g:=-\frac{4(n-1)}{n-2}\Delta_g+R_g,
$$
which has the following well known conformal covariance property: if $\gt=u^{4/(n-2)} g$, then 
$$
\mathcal{L}_{\gt}\phi = u^{-\frac{n+2}{n-2}}\mathcal{L}_g(\phi u),\quad \forall \phi\in C^2(M).
$$
Moreover, the scalar curvature of the conformally related metric $\gt$ is given by
$$
R_{\gt}=u^{-\frac{n+2}{n-2}} \mathcal{L}_g u.
$$
Therefore, the operator $\mathcal{L}$ plays a prominent role in the resolution of the Yamabe variational problem. Given $t\in\RR$, we define the scalar-Weyl curvature
\begin{equation}\label{scalar-weyl}
F_g:= R_g + t |W_g|_g
\end{equation}
and the associated modified conformal Laplacian
$$
\mathcal{L}^t_g:=-\frac{4(n-1)}{n-2}\Delta_g+F_g,
$$
where $|W_g|_g$ denotes the norm of the Weyl curvature of $g$. The key observation in \cite{gur1} is that the couples $(F_g, \mathcal{L}^t_g)$ and $(R_g, \mathcal{L}_g)$ share the same conformal properties. In fact, if $\gt=u^{4/(n-2)} g$, then
\begin{equation}\label{e-confF}
\mathcal{L}^t_{\gt}\phi = u^{-\frac{n+2}{n-2}}\mathcal{L}^t_g(\phi u),\quad \forall \phi\in C^2(M),\quad\text{and}\quad F_{\gt}=u^{-\frac{n+2}{n-2}} \mathcal{L}^t_g u.
\end{equation}
In particular, a spectral argument shows the following \cite[Proposition 3.2]{gur1}:
\begin{lemma}\label{l-g1} Let $(M,g)$ be a $n$-dimensional closed Riemannian manifold. Then, there exists  a $C^{2,\alpha}$ metric $\gt\in[g]$ with either $F_{\gt}>0$, $F_{\gt}<0$, or $F_{\gt}\equiv 0$. Moreover, these three possibilities are mutually exclusive. 
\end{lemma}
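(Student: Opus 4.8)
The plan is to mimic the classical proof of the sign trichotomy for the Yamabe invariant, replacing the conformal Laplacian $\mathcal{L}_g$ by the modified conformal Laplacian $\mathcal{L}^t_g$ and using the conformal covariance identities \eqref{e-confF}. Since $\mathcal{L}^t_g$ is a self-adjoint, second-order elliptic operator with smooth potential $F_g = R_g + t|W_g|_g$, it has a discrete spectrum, and one may take its first eigenvalue
\begin{equation*}
\lambda_1(\mathcal{L}^t_g)=\inf_{0\not\equiv\phi\in H^1(M)}\frac{\displaystyle\int_M\pa{\tfrac{4(n-1)}{n-2}|\nabla\phi|^2+F_g\,\phi^2}\,dV_g}{\displaystyle\int_M\phi^2\,dV_g}.
\end{equation*}
By standard elliptic theory (Rellich compactness for the existence of a minimizer, plus the strong maximum principle / Harnack inequality) the infimum is attained by an eigenfunction $u$ that may be chosen strictly positive and, by Schauder estimates applied to $\mathcal{L}^t_g u=\lambda_1(\mathcal{L}^t_g)u$ with $F_g\in C^\infty$, of class $C^{2,\alpha}$ (indeed $C^\infty$).

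Next I would perform the conformal change $\gt=u^{4/(n-2)}g$. By the second identity in \eqref{e-confF}, $F_{\gt}=u^{-\frac{n+2}{n-2}}\mathcal{L}^t_g u = \lambda_1(\mathcal{L}^t_g)\,u^{1-\frac{n+2}{n-2}}$, which is a strictly positive function times the constant $\lambda_1(\mathcal{L}^t_g)$. Hence the sign of $F_{\gt}$ is constant on $M$ and equals the sign of $\lambda_1(\mathcal{L}^t_g)$: if $\lambda_1>0$ then $F_{\gt}>0$ everywhere, if $\lambda_1<0$ then $F_{\gt}<0$ everywhere, and if $\lambda_1=0$ then $F_{\gt}\equiv 0$. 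This produces the desired $C^{2,\alpha}$ metric in $[g]$ and covers the three stated alternatives.

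For mutual exclusivity, I would argue that the sign of $\lambda_1(\mathcal{L}^t_g)$ is a conformal invariant: given two metrics $g,\gt=v^{4/(n-2)}g$ in the same conformal class, the first covariance identity in \eqref{e-confF} shows $\mathcal{L}^t_{\gt}\phi = v^{-\frac{n+2}{n-2}}\mathcal{L}^t_g(\phi v)$, so substituting test functions $\phi = \psi/v$ transforms the Rayleigh quotient for $\mathcal{L}^t_{\gt}$ (with volume form $dV_{\gt}=v^{2n/(n-2)}dV_g$) into one with the same sign as that for $\mathcal{L}^t_g$; more directly, if $F_{\gt}$ has a fixed sign then testing against $\phi\equiv 1$ shows $\lambda_1(\mathcal{L}^t_{\gt})$ has that sign, and in the case $F_{\gt}\equiv 0$ one gets $\lambda_1(\mathcal{L}^t_{\gt})=0$. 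Therefore the three possibilities cannot occur simultaneously for conformally related metrics with constant-sign (or vanishing) $F$, and in particular within $[g]$ exactly one alternative holds.

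The only real subtlety — and the step I would treat most carefully — is the positivity and regularity of the first eigenfunction: one must ensure $\mathcal{L}^t_g$ is a genuine Schrödinger-type operator (it is, since $|W_g|_g$ is a smooth nonnegative function away from points where $W_g=0$, but is only Lipschitz, not $C^\infty$, where $W_g$ vanishes), so the potential $F_g$ is merely $C^{0}$ in general, which still suffices for $W^{2,p}$ and hence $C^{1,\alpha}$ regularity of $u$, and one gets $u>0$ from the maximum principle; this is exactly why the statement claims only a $C^{2,\alpha}$ metric rather than a smooth one. Everything else is a routine transcription of Yamabe's spectral argument, citing \cite[Proposition 3.2]{gur1} for the details.
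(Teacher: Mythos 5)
Your argument is correct and is exactly the spectral argument the paper invokes for this lemma (the paper gives no proof of its own, citing Gursky's Proposition 3.2): minimize the Rayleigh quotient of $\mathcal{L}^t_g$, conformally rescale by the positive first eigenfunction, and use the conformal invariance of the sign of $\lambda_1$ for the mutual exclusivity. One small correction to your regularity discussion: $|W_g|_g$ is Lipschitz (not merely continuous), since the norm of a smooth tensor field is locally Lipschitz even across its zero set, so $F_g\in C^{0,\alpha}$ and Schauder theory gives $u\in C^{2,\alpha}$ --- not just $W^{2,p}\cap C^{1,\alpha}$ --- which is precisely what is needed for $\gt=u^{4/(n-2)}g$ to be the claimed $C^{2,\alpha}$ metric.
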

In analogy with the Yamabe problem, Gursky defined the functional
$$
\widehat{Y}(u):=\frac{\int_M u\, \mathcal{L}^t_g u\,dV_g}{\left(\int_M u^{2n/(n-2)}\,dV_g\right)^{(n-2)/2}}
$$
and the conformal invariant
$$
\widehat{Y}(M,[g]):=\inf_{u\in H^1(M)} \widehat{Y}(u).
$$
Using \eqref{e-confF}, it is easy to see that the functional $u\mapsto\widehat{Y}(u)$ is equivalent to the modified Einstein-Hilbert functional
$$
\gt=u^{4/(n-2)}g \longmapsto \frac{\int_M F_{\gt}\,dV_{\gt}}{\text{Vol}_{\gt}(M)^{(n-2)/2}}.
$$
Following a classical subcritical regularization argument, Gursky showed that, if $\widehat{Y}(M,[g])\leq 0$, then the variational problem of finding a conformal metric $\gt\in[g]$ with constant scalar-Weyl curvature $F$ can be solved. The proof (in dimension four) can be found in \cite[Proposition 3.5]{gur1} and it can be trivially generalized to dimension $n\geq 4$. In particular, we have the following sufficient condition to the existence of constant negative scalar-Weyl curvature:
\begin{lemma}\label{l-g2} Let $(M,g)$ be a $n$-dimensional closed Riemannian manifold. If there exists a metric $g'\in[g]$ such that
$$
\int_M F_{g'}\,dV_{g'} <0,
$$
then, there exists a (unique) $C^{2,\alpha}$ metric $\gt\in[g]$ such that $F_{\gt}\equiv -1$.
\end{lemma}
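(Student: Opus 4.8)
The plan is to rephrase the assertion as a semilinear elliptic equation with a favourable structure (superlinear nonlinearity, negative zeroth-order term), obtained after normalizing the conformal representative by means of Lemma~\ref{l-g1}. First I would record the elementary translation: writing $g'=v^{4/(n-2)}g$ with $v>0$ and using the transformation rule \eqref{e-confF} together with $dV_{g'}=v^{2n/(n-2)}dV_g$, one gets $\int_M F_{g'}\,dV_{g'}=\int_M v\,\mathcal{L}^t_g v\,dV_g$, which is a positive multiple of $\widehat{Y}(v)$; hence the hypothesis is equivalent to $\widehat{Y}(M,[g])<0$. Next I would invoke Lemma~\ref{l-g1}: if its $C^{2,\alpha}$ representative had $F>0$ (resp.\ $F\equiv0$), then by compactness of $M$ the quadratic form $u\mapsto\int_M\big(\tfrac{4(n-1)}{n-2}|\nabla u|^2+Fu^2\big)\,dV$ would be nonnegative, forcing $\widehat{Y}(M,[g])\geq0$, a contradiction. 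So that representative has $F<0$ everywhere, and after replacing $g$ by it I may assume $g\in C^{2,\alpha}$ and $F_g\leq-\mu<0$ on $M$ for some $\mu>0$. (It is worth noting that $F_g=R_g+t|W_g|_g$ is in general only Hölder, not smooth, since $W\mapsto|W|$ is merely Lipschitz near $W=0$; this is precisely why the conclusion cannot be upgraded beyond $C^{2,\alpha}$.)

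By \eqref{e-confF}, a metric $\widetilde{g}=u^{4/(n-2)}g$ with $u>0$ satisfies $F_{\widetilde{g}}\equiv-1$ exactly when
\[
-\frac{4(n-1)}{n-2}\Delta_g u+F_g u=-u^{\frac{n+2}{n-2}} .
\]
I would solve this by the monotone sub/super-solution scheme: a large constant $u\equiv C$ (with $C^{4/(n-2)}\geq\max_M|F_g|$) is a supersolution, a small constant $u\equiv\delta$ (with $\delta^{4/(n-2)}\leq\mu$) is a subsolution, and $\delta\leq C$. After adding a large multiple of $u$ to both sides so that the resulting nonlinearity is nondecreasing on the order interval $[\delta,C]$ and the linear operator satisfies the maximum principle, the monotone iteration converges to a solution $u$ with $0<\delta\leq u\leq C$. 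Elliptic bootstrapping ($W^{2,p}$ estimates, then Schauder, using $F_g\in C^{0,\alpha}$) gives $u\in C^{2,\alpha}$, so $\widetilde{g}=u^{4/(n-2)}g\in[g]$ is a $C^{2,\alpha}$ metric with $F_{\widetilde{g}}\equiv-1$. Alternatively, and closer to Gursky's treatment, one could minimize the subcritical functional obtained from $\widehat{Y}$ by replacing the exponent $\tfrac{2n}{n-2}$ with $p\in\big(2,\tfrac{2n}{n-2}\big)$, use $\widehat{Y}(M,[g])<0$ to keep the corresponding minimal values negative, derive uniform $C^{0}$ bounds for the (positive) minimizers from the sign of those values via the maximum principle and Moser iteration, and let $p\to\tfrac{2n}{n-2}$.

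For uniqueness, suppose $u_1,u_2>0$ both solve the equation with $u_1\not\equiv u_2$. After possibly swapping them, $w:=u_1/u_2$ attains a value $>1$, hence an interior maximum $w(x_0)>1$, so $\nabla w(x_0)=0$ and $\Delta w(x_0)\leq0$. Substituting $\tfrac{4(n-1)}{n-2}\Delta_g u_i=F_g u_i+u_i^{(n+2)/(n-2)}$ into the identity for $\Delta(u_1/u_2)$ and using $\nabla w(x_0)=0$ (which makes all the gradient terms cancel) leaves $\tfrac{4(n-1)}{n-2}\Delta w(x_0)=\tfrac{u_1(x_0)}{u_2(x_0)}\big(u_1(x_0)^{4/(n-2)}-u_2(x_0)^{4/(n-2)}\big)>0$, a contradiction; hence $u_1\equiv u_2$. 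The genuinely delicate point is the existence step: one must check that the iteration really lands on a strictly positive solution and that regularity is not lost through the nonsmoothness of $|W_g|_g$. But once the reduction has turned the problem into one with superlinear nonlinearity and $F_g<0$, this is the benign \emph{negative Yamabe} regime, where no concentration or loss of compactness can occur, so the scheme goes through without further difficulty.
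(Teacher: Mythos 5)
Your proof is correct, but it is genuinely different from what the paper does: the paper offers no argument for Lemma~\ref{l-g2} at all, simply quoting Gursky's subcritical regularization scheme (\cite{gur1}, Proposition 3.5) — minimize $\widehat{Y}$ with the critical exponent $2n/(n-2)$ replaced by $p<2n/(n-2)$, use the negativity of the infima to get uniform bounds on the minimizers, and let $p\to 2n/(n-2)$ — which is exactly the alternative you mention in passing. Your main line is more elementary and self-contained: you first observe that the hypothesis forces $\widehat{Y}(M,[g])<0$ (the identity $\int_M F_{g'}\,dV_{g'}=\int_M v\,\mathcal{L}^t_g v\,dV_g$ is right), then use the trichotomy of Lemma~\ref{l-g1} together with the conformal invariance of the sign of $\widehat{Y}$ to pass to a representative with $F<0$ pointwise, and from there the equation $\mathcal{L}^t_g u=-u^{(n+2)/(n-2)}$ is solved by constant barriers and monotone iteration, with uniqueness from the quotient maximum principle (your computation at the maximum of $u_1/u_2$ is correct — the $F_g$ terms cancel and the sign of $u_1^{4/(n-2)}-u_2^{4/(n-2)}$ gives the contradiction). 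What your route buys is a proof with no variational machinery and no concentration analysis, exploiting that the strictly negative case is coercive once a negative pointwise representative is found; what Gursky's route buys is that it does not need the pointwise sign normalization and extends verbatim to the borderline case $\widehat{Y}=0$ (irrelevant for the lemma as stated). Your remark that $F_g=R_g+t|W_g|_g$ is only Lipschitz in the $2$-jet of $g$, so that the bootstrap stops at $C^{2,\alpha}$, correctly identifies why the paper cannot claim better regularity for the conformal factor. The only point worth making explicit is the conformal invariance of $\widehat{Y}(M,[g])$ (via $\widehat{Y}_{\gt}(\phi)=\widehat{Y}_g(\phi v)$ from \eqref{e-confF}), which you use implicitly when ruling out the cases $F>0$ and $F\equiv 0$ for the representative produced by Lemma~\ref{l-g1}; this is standard and does not constitute a gap.
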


To conclude this section, we observe that the full modified Yamabe problem related to the scalar-Weyl curvature and more generally modified scalar curvatures was treated in \cite{ito}. Moreover, these techniques introduced by Gursky, have been used in various contexts, especially in the four-dimensional case. For instance we want to highlight \cite{gurleb1, gurleb2, leb, ses}.

\

\section{Aubin's metric deformation: two integral inequalities} 

In this section we first recall the variational formulas for some geometric quantities under the deformation of the metric of the type
$$
g'=g+df\otimes df,\quad f\in C^{\infty}(M).
$$ 
In \cite{aub1, aub2} Aubin, with a clever coupling of this deformation with a conformal one, proved local and global existence results of metrics satisfying special curvature conditions. The proof of the first three formulas can be found in \cite{aub2}. The variation of the Weyl tensor can be found in \cite[Chapter 2]{catmasbook}.
\begin{lemma}\label{l-a0} Let $(M,g)$ be a $n$-dimensional Riemannian manifold and consider the variation of the metric $g$, in a given local coordinate system, defined by
$$
g'_{ij}:=g_{ij}+f_i f_j,\quad f\in C^{\infty}(M).
$$
Then we have
\begin{align*}
dV_{g'} &= w^{1/2} dV_g,\\
(g')^{ij} &= g^{ij} - \frac{f^if^j}{w},\\
R' &= R -\frac{2}{w}R_{ij}f^if^j + \frac{1}{w}\sq{\pa{\Delta f}^2-f_{it}f^{it}} -\frac{2}{w^2}\sq{\pa{\Delta f}f^if^jf_{ij}-f^if_{ij}f^{jp}f_p},\\
W'_{ijkt} &= W_{ijkt} + E(f)_{ijkt},
\end{align*}
with $w:=1+|\nabla f|^2$ and 
\begin{align*}
&E(f)_{ijkt}:=\frac{1}{w}\pa{f_{ik}f_{jt}-f_{it}f_{jk}} +\frac{1}{n-2}\pa{R_{ik}f_jf_t-R_{it}f_jf_k+R_{jt}f_if_k-R_{jk}f_if_t} \\ \nonumber &+\frac{R}{(n-1)(n-2)}\pa{g_{ik}f_jf_t-g_{it}f_jf_k+g_{jt}f_if_k-g_{jk}f_if_t} \\ \nonumber &+\frac{f^pf^q}{w(n-2)}\sq{R_{ipkq}\pa{g_{jt}+f_jf_t}-R_{iptq}\pa{g_{jk}+f_jf_k}+R_{jptq}\pa{g_{ik}+f_if_k}-R_{jpkq}\pa{g_{it}+f_if_t}} \\ \nonumber &-\frac{2R_{pq}f^pf^q}{w(n-1)(n-2)}\sq{g_{ik}g_{jt}-g_{it}g_{jk}+g_{ik}f_jf_t-g_{it}f_jf_k+g_{jt}f_if_k-g_{jk}f_if_t} \\ \nonumber &-\frac{1}{w(n-2)}\set{\sq{(\Delta f) f_{ik}-f_{ip}f^p_k}\pa{g_{jt}+f_jf_t}-\sq{(\Delta f) f_{it}-f_{ip}f^p_t}\pa{g_{jk}+f_jf_k}} \\ \nonumber &-\frac{1}{w(n-2)}\set{\sq{(\Delta f) f_{jt}-f_{jp}f^p_t}\pa{g_{ik}+f_if_k}-\sq{(\Delta f) f_{jk}-f_{jp}f^p_k}\pa{g_{it}+f_if_t}} \\ \nonumber &+\frac{1}{w(n-1)(n-2)}\sq{\pa{\Delta f}^2-\abs{\nabla^2 f}^2}\pa{g_{ik}g_{jt}-g_{it}g_{jk}+g_{ik}f_jf_t-g_{it}f_jf_k+g_{jt}f_if_k-g_{jk}f_if_t} \\ \nonumber &+\frac{f^pf^q}{w^2(n-2)}\sq{\pa{f_{ik}f_{pq}-f_{ip}f_{kq}}\pa{g_{jt}+f_jf_t}-\pa{f_{it}f_{pq}-f_{ip}f_{tq}}\pa{g_{jk}+f_jf_k}} \\ \nonumber &+\frac{f^pf^q}{w^2(n-2)}\sq{\pa{f_{jt}f_{pq}-f_{jp}f_{tq}}\pa{g_{ik}+f_if_k}-\pa{f_{jk}f_{pq}-f_{jp}f_{kq}}\pa{g_{it}+f_if_t}}  \\ \nonumber &-\frac{2}{w^2(n-1)(n-2)}\sq{(\Delta f)f^pf^qf_{pq}-f^pf_{pq}f^{qr}f_r}\pa{g_{ik}g_{jt}-g_{it}g_{jk}} \\ \nonumber &-\frac{2}{w^2(n-1)(n-2)}\sq{(\Delta f)f^pf^qf_{pq}-f^pf_{pq}f^{qr}f_r}\pa{g_{ik}f_jf_t-g_{it}f_jf_k+g_{jt}f_if_k-g_{jk}f_if_t}.
\end{align*}
Moreover,
$$
R' = R - \frac{R_{ij}f^if^j}{w} + \nabla^i \left( \frac{\Delta f f_i-f_{ij}f^j}{w}\right)
$$
and thus
$$
\int_M R'\,dV_g = \int_M R\,dV_g - \int_M  \frac{R_{ij}f^if^j}{1+|\nabla f|^2}\,dV_g.
$$
\end{lemma}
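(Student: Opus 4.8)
All the stated identities rest on a single elementary computation, that of the \emph{difference tensor} of the two Levi--Civita connections; the formulas for $dV_{g'}$, $(g')^{ij}$ and $R'$ are classical (due to Aubin) and the Weyl variation is the one worked out in \cite{catmasbook}, so I will only indicate how the computation is organised and where the effort lies.

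First I would dispose of the two algebraic identities. Writing $g' = g + df\otimes df$ in matrix form as $G' = G + vv^{\top}$, $v = (f_i)$, the matrix determinant lemma gives $\det G' = (\det G)(1 + g^{ij}f_if_j) = w\det G$, hence $dV_{g'} = w^{1/2}\,dV_g$; the Sherman--Morrison formula gives $(g')^{ij} = g^{ij} - f^if^j/w$, which one also checks directly by contracting against $g'_{jk}$ and using $w = 1+|\nabla f|^2$. I would record for later the consequence $(g')^{ij}f_j = f^i/w$.

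The crux is the difference tensor $A^k_{ij} := {\Gamma'}^k_{ij} - \Gamma^k_{ij}$. Since $\nabla_l g'_{ij} = \nabla_l(f_if_j) = f_{li}f_j + f_{lj}f_i$ (Hessian with respect to $g$), the Koszul-type identity $A^k_{ij} = \tfrac12(g')^{kl}(\nabla_i g'_{jl} + \nabla_j g'_{il} - \nabla_l g'_{ij})$ collapses — the $f_if_j$ terms cancel by symmetry of the Hessian — to the rank-one expression
$$
A^k_{ij} = (g')^{kl}f_{ij}f_l = \frac{f_{ij}f^k}{w}.
$$
This simplicity is precisely what makes Aubin's deformation tractable. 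Feeding $A$ into the universal transformation law
$$
{R'}^{l}{}_{kij} = R^{l}{}_{kij} + \nabla_i A^{l}_{jk} - \nabla_j A^{l}_{ik} + A^{l}_{ip}A^{p}_{jk} - A^{l}_{jp}A^{p}_{ik},
$$
performing the differentiations of $f_{ij}f^k/w$ (which bring in the third derivatives $\nabla_m f_{ij}$ and the factor $\nabla_m(1/w) = -2f_{mp}f^p/w^2$), contracting to get the Ricci tensor $R'_{kj}$ and then the scalar curvature $R' = (g')^{kj}R'_{kj}$ — traced with $g'$, not with $g$ — and finally grouping terms according to their power of $w$, one arrives at the stated formula for $R'$. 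The Weyl variation then follows by writing, for each metric, $W = Riem - \tfrac{1}{n-2}Ric\kn g + \tfrac{R}{2(n-1)(n-2)}\,g\kn g$ and subtracting, after which one substitutes the already-known expressions for $R'_{ijkt}$, $Ric'$, $R'$, $(g')^{ij}$ and $g'_{ij}=g_{ij}+f_if_j$. I expect this last step to be the genuine obstacle: it is not conceptually deep, but the decomposition generates on the order of a dozen distinct tensorial structures — exactly the list appearing in $E(f)_{ijkt}$ — and one must keep meticulous track of whether each contracted index is raised with $g$ or with $g'$, of the symmetrisations built into $\kn$, and of which terms carry $1/w$ and which carry $1/w^2$; a single stray factor of $w$, or a sign error, would contaminate the whole answer.

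For the divergence form of $R'$, set $P_i := (\Delta f)f_i - f_{ij}f^j$ and compute $\nabla^i(P_i/w) = w^{-1}\nabla^i P_i - w^{-2}P_i\nabla^i w$. Using $\nabla^i w = 2f^{ij}f_j$, $\nabla^i f_i = \Delta f$, and the Bochner commutation identity $\nabla^i f_{ij} = \nabla_j(\Delta f) + R_{jl}f^l$, one finds $\nabla^i P_i = (\Delta f)^2 - |\nabla^2 f|^2 - R_{jl}f^jf^l$ (the two copies of $\langle\nabla\Delta f,\nabla f\rangle$ cancel), while $-w^{-2}P_i\nabla^i w$ reproduces exactly the $1/w^2$ bracket of the formula for $R'$; combined with the term $-R_{ij}f^if^j/w$ this yields $R' = R - R_{ij}f^if^j/w + \nabla^i(P_i/w)$. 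Integrating over the closed manifold $M$ against $dV_g$ and applying the divergence theorem annihilates $\int_M \nabla^i(P_i/w)\,dV_g$, leaving $\int_M R'\,dV_g = \int_M R\,dV_g - \int_M \frac{R_{ij}f^if^j}{1+|\nabla f|^2}\,dV_g$.
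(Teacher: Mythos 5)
Your proposal is correct and follows essentially the same route as the sources the paper itself cites for this lemma (\cite{aub2} for $dV_{g'}$, $(g')^{ij}$ and $R'$; \cite[Chapter 2]{catmasbook} for the Weyl variation): the rank-one difference tensor $A^k_{ij}=f_{ij}f^k/w$ is indeed the key simplification, and your verifications of the two algebraic identities, of the consistency between the two expressions for $R'$ via the commutation $\nabla^i f_{ij}=\nabla_j\Delta f+R_{jl}f^l$, and of the final integral identity (which, as you note, needs $M$ closed) are all accurate. The only step you leave unexecuted is the explicit expansion yielding $E(f)_{ijkt}$, which is precisely the computation the paper also outsources to the references, so there is no substantive gap relative to the paper's own treatment.
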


We will denote by $[g]$ the conformal class of the metric $g$. Using a conformal deformation, we can show the following first integral sufficient condition for the existence of a constant negative scalar-Weyl curvature:
\begin{lemma}\label{l-a1} Let $M$ be a $n$-dimensional closed manifold. If there exists a positive smooth function $u\in C^{\infty}(M)$ such that for a Riemannian metric $g$ on $M$ it holds
$$
\int_M F_g\,u^2\,dV_g + \frac{4(n-1)}{n-2}\int_M |\nabla u|^2\,dV_g <0,
$$
then there exists a (unique) $C^{2,\alpha}$ metric $\gt\in[g]$ such that $F_{\gt}\equiv -1$.
\end{lemma}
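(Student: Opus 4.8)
The plan is to reduce the statement to Lemma~\ref{l-g2} by conformally changing $g$ using $u$ itself as the conformal factor. Concretely, I would set $g':=u^{4/(n-2)}g\in[g]$; since $M$ is closed and $u$ is smooth and positive, $g'$ is a genuine smooth metric, and the exponent makes sense because $n\geq 3$. I would then compute the total scalar-Weyl curvature of $g'$ by means of the conformal covariance in \eqref{e-confF}: from $F_{g'}=u^{-(n+2)/(n-2)}\mathcal{L}^t_g u$ and $dV_{g'}=u^{2n/(n-2)}\,dV_g$, multiplying and integrating gives
$$
\int_M F_{g'}\,dV_{g'}=\int_M u\,\mathcal{L}^t_g u\,dV_g .
$$

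Next I would expand the modified conformal Laplacian, $\mathcal{L}^t_g u=-\tfrac{4(n-1)}{n-2}\Delta_g u+F_g u$, and integrate by parts on the closed manifold $M$ to obtain
$$
\int_M u\,\mathcal{L}^t_g u\,dV_g=\frac{4(n-1)}{n-2}\int_M|\nabla u|^2\,dV_g+\int_M F_g\,u^2\,dV_g .
$$
By hypothesis the right-hand side is strictly negative, hence $\int_M F_{g'}\,dV_{g'}<0$. Applying Lemma~\ref{l-g2} to $g'$ (with background metric $g$, noting $[g']=[g]$) then yields the desired unique $C^{2,\alpha}$ metric $\widetilde g\in[g]$ with $F_{\widetilde g}\equiv-1$, and uniqueness is inherited directly from that lemma.

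I do not expect a genuine obstacle here: the whole content of the lemma is the observation that the left-hand side of the assumed inequality is, up to normalization, exactly Gursky's energy $\widehat{Y}(u)$, i.e.\ the total scalar-Weyl curvature of the conformal metric $u^{4/(n-2)}g$, so that the subcritical existence result in the nonpositive case (Lemma~\ref{l-g2}) applies verbatim. The only points deserving a word of care are that the conformal factor is positive and bounded away from zero — automatic from smoothness and positivity on the compact $M$ — and that one works in dimension $n\geq 3$ so that the (modified) conformal Laplacian formalism of Section~\ref{s-swc} is available.
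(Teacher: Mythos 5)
Your argument is correct and is essentially identical to the paper's proof: both set $g'=u^{4/(n-2)}g$, use the conformal covariance \eqref{e-confF} together with $dV_{g'}=u^{2n/(n-2)}dV_g$ and an integration by parts to identify $\int_M F_{g'}\,dV_{g'}$ with the quantity assumed negative, and then invoke Lemma~\ref{l-g2}.
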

\begin{proof} We consider the conformal metric $g'_{ij}=u^{4/(n-2)} g$. By \eqref{e-confF} we have
$$
F_{g'}=R_{g'}+t|W_{g'}|_{g'}= u^{-4/(n-2)}\left(R_g+t|W_g|_g-\frac{4(n-1)}{n-2}\frac{\Delta u}{u}\right).
$$
Therefore, since $dV_{g'}=u^{2n/(n-2)}dV_g$, using the assumption we obtain
$$
\int_M F_{g'}\,dV_{g'} = \int_M F_g\,u^2\,dV_g + \frac{4(n-1)}{n-2}\int_M |\nabla u|^2\,dV_g <0.
$$
The conclusion follows now by Lemma \ref{l-g2}.
\end{proof}
Using Aubin's deformations, we prove the following second integral sufficient condition for the existence of a constant negative scalar-Weyl curvature:
\begin{lemma}\label{l-a2} Let $M$ be a $n$-dimensional closed manifold. Suppose that there exists a smooth function $\vphi\in C^{\infty}(M)$ such that for a Riemannian metric $g$ on $M$ and some $t>0$ it holds
\begin{align*}
\int_M &\left(R_g+t|W_g|_{\vphi}\right)\,dV_g + t\int_M |E_g(\vphi)|_{\vphi}\,dV_g \\
&-\int_M \frac{R_{ij}\vphi^i\vphi^j}{1+|\nabla \vphi|^2}\,dV_g +\frac{n-1}{n-2}\int_M \left[\frac{\vphi_{ip}\vphi^p\vphi_{iq}\vphi^q}{(1+|\nabla\vphi|^2)^2} -\frac{|\vphi_{ij}\vphi^i\vphi^j|^2}{(1+|\nabla\vphi|^2)^3} \right]\,dV_g<0,
\end{align*}
where  $|\cdot|_{\vphi}$ denotes the norm with respect of $g+d\vphi\otimes d\vphi$ and $E_g(\vphi)$ is defined as in Lemma \ref{l-a0}. Then, there exists a (unique) $C^{2,\alpha}$ metric $\gt\in[g+d\vphi\otimes d\vphi]$ such that $F_{\gt}\equiv -1$.
\end{lemma}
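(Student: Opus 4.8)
The plan is to reduce the statement to Lemma~\ref{l-a1} applied to the Aubin-deformed metric $g':=g+d\vphi\otimes d\vphi$, choosing the conformal test function so as to absorb the Jacobian $w^{1/2}$ of the deformation, where $w:=1+|\nabla\vphi|^{2}$.

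Concretely, I would feed into Lemma~\ref{l-a1} the metric $g'$ together with the smooth, positive function
$$
u:=w^{-1/4}=\bigl(1+|\nabla\vphi|^{2}\bigr)^{-1/4}.
$$
The single nonobvious point is this choice of $u$: it is forced by the requirement $u^{2}\,dV_{g'}=w^{-1/2}w^{1/2}\,dV_g=dV_g$, which makes the conformal weight cancel the Jacobian of Aubin's deformation. With this in hand, writing $F_{g'}=R_{g'}+t|W_{g'}|_{g'}$ one has
$$
\int_{M}F_{g'}\,u^{2}\,dV_{g'}=\int_{M}R_{g'}\,dV_g+t\int_{M}|W_{g'}|_{g'}\,dV_g.
$$
For the scalar part I would quote the last identity of Lemma~\ref{l-a0} with $f=\vphi$, which gives at once $\int_{M}R_{g'}\,dV_g=\int_{M}R_g\,dV_g-\int_{M}\frac{R_{ij}\vphi^{i}\vphi^{j}}{w}\,dV_g$, the leftover divergence term dropping out since it is a $g$-divergence and $M$ is closed. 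For the Weyl part I would use the variation formula $W'_{ijkt}=W_{ijkt}+E(\vphi)_{ijkt}$ of Lemma~\ref{l-a0}, the triangle inequality for the $g'$-norm on $(0,4)$-tensors, $|W_{g'}|_{g'}\le|W_g|_{\vphi}+|E_g(\vphi)|_{\vphi}$, and the hypothesis $t>0$ (this is the only place positivity of $t$ is used). Combining, $\int_{M}F_{g'}\,u^{2}\,dV_{g'}$ is bounded above by the first line minus the third integral in the statement of the Lemma.

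It remains to match the gradient term $\frac{4(n-1)}{n-2}\int_{M}|\nabla u|^{2}_{g'}\,dV_{g'}$ with the last integral in the statement. Since $u=w^{-1/4}$, using $\nabla_i w=2\vphi_{ip}\vphi^{p}$ together with the inverse-metric formula $(g')^{ij}=g^{ij}-\vphi^{i}\vphi^{j}/w$ of Lemma~\ref{l-a0}, one computes
$$
|\nabla u|^{2}_{g'}=\tfrac{1}{16}w^{-5/2}|\nabla w|^{2}_{g'}=\tfrac14 w^{-5/2}\Bigl(\vphi_{ip}\vphi^{p}\vphi_{iq}\vphi^{q}-\tfrac1w|\vphi_{ij}\vphi^{i}\vphi^{j}|^{2}\Bigr),
$$
and multiplying by $dV_{g'}=w^{1/2}\,dV_g$ and by $\frac{4(n-1)}{n-2}$ reproduces exactly $\frac{n-1}{n-2}\int_{M}\bigl[\frac{\vphi_{ip}\vphi^{p}\vphi_{iq}\vphi^{q}}{w^{2}}-\frac{|\vphi_{ij}\vphi^{i}\vphi^{j}|^{2}}{w^{3}}\bigr]\,dV_g$. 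Adding the two contributions, the hypothesis is precisely the inequality $\int_{M}F_{g'}\,u^{2}\,dV_{g'}+\frac{4(n-1)}{n-2}\int_{M}|\nabla u|^{2}_{g'}\,dV_{g'}<0$, so Lemma~\ref{l-a1} applied with $g'$ in place of $g$ yields a unique $C^{2,\alpha}$ metric $\gt\in[g']=[g+d\vphi\otimes d\vphi]$ with $F_{\gt}\equiv-1$.

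I do not expect a genuine obstacle: the argument is essentially a change of variables assembling the pieces already provided by Lemmas~\ref{l-a0} and \ref{l-a1}. The step needing the most care is the gradient computation, where one must keep the \emph{exact} expression for $(g')^{ij}$ rather than linearizing — the $-\vphi^{i}\vphi^{j}/w$ correction is exactly what produces the negative term $-|\vphi_{ij}\vphi^{i}\vphi^{j}|^{2}/w^{3}$ in the statement — and one should verify once and for all that the divergence term in the variation of the scalar curvature really has vanishing integral against $dV_g$. Recognizing that $u=(1+|\nabla\vphi|^{2})^{-1/4}$ is the correct conformal factor, forced by the need to cancel the Jacobian $w^{1/2}$, is the only place where a little foresight is required.
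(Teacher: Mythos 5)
Your proposal is correct and follows essentially the same route as the paper: applying Lemma \ref{l-a1} to $g'=g+d\vphi\otimes d\vphi$ with $u=(1+|\nabla\vphi|^{2})^{-1/4}$, using the integral identity for $R_{g'}$ and the triangle inequality $|W_{g'}|_{g'}\le|W_g|_{\vphi}+|E_g(\vphi)|_{\vphi}$ together with $t>0$. The gradient computation with the exact inverse metric $(g')^{ij}=g^{ij}-\vphi^{i}\vphi^{j}/w$ matches the paper's term for term.
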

\begin{proof} Let $\vphi\in C^{\infty}(M)$. Applying Lemma \ref{l-a1} to the metric $g'=g+d\vphi\otimes d\vphi$ with 
$$
u:= \left(1+|\nabla\vphi|^2\right)^{-1/4},
$$
we know that there exists a conformal metric $g''\in[g']$ with $F_{g''}\equiv -1$, if
\begin{align*}
\int_M \frac{F_{g'}}{\left(1+|\nabla\vphi|^2\right)^{1/2}}\,dV_{g'}+\frac{4(n-1)}{n-2}\int_M \left|\nabla \left(1+|\nabla\vphi|^2\right)^{-1/4}\right|_{g'}^2\,dV_{g'}<0.
\end{align*}
From Lemma \ref{l-a0} we obtain the equivalent inequality
\begin{align*}
\int_M &F_{g'}\,dV_{g}+\frac{4(n-1)}{n-2}\int_M \partial_i\left(1+|\nabla\vphi|^2\right)^{-1/4}\partial_j\left(1+|\nabla\vphi|^2\right)^{-1/4}\left(g^{ij}-\frac{\vphi_i\vphi_j}{1+|\nabla\vphi|^2}\right)\,dV_{g'}\\
&= \int_M F_{g'}\,dV_{g}+\frac{n-1}{n-2}\int_M \left[\frac{\vphi_{ip}\vphi^p\vphi_{iq}\vphi^q}{(1+|\nabla\vphi|^2)^2} -\frac{|\vphi_{ij}\vphi^i\vphi^j|^2}{(1+|\nabla\vphi|^2)^3} \right]\,dV_g<0.
\end{align*}
Using again Lemma \ref{l-a0}, we get
$$
\int_M F_{g'}\,dV_g= \int_M \left(R_{g'}+t|W_{g'}|_{g'}\right)\,dV_g =  \int_M \left(R_{g}+t|W_{g'}|_{g'}\right)\,dV_g - \int_M \frac{R_{ij}\vphi^i\vphi^j}{1+|\nabla \vphi|^2}\,dV_g.
$$
Using that
$$
|W_{g'}|_{g'} \leq |W_g|_{g'} + |E_g (\vphi)|_{g'}
$$
where $E_g(\vphi)$ is defined as in Lemma \ref{l-a0}, we conclude the proof of this lemma.
\end{proof}

\

\section{Proof of Theorem \ref{t-main}}

In this section we prove Theorem \ref{t-main}. The strategy of the proof takes strong inspiration from the works of Aubin in \cite{aub1,aub2}.

\medskip

\subsection*{Step 1.} From \cite{aub1, aub2} we know that, on a closed $n$-dimensional manifold, there exists a Riemannian metric $g'$ with constant scalar curvature $-1$. In particular, if $t\leq 0$, $F_{g'}<0$. By Lemma  \ref{l-g2}, there exists a metric $\gt\in[g']$ such that $F_{\gt}\equiv -1$. Therefore, from now on we focus on the case
$$
t>0.
$$

First of all, we can choose a Riemannian metric $g$ with 
$$F_g=R_g+t|W_g|_g\geq 0\quad\text{on } M,$$ 
otherwise Theorem \ref{t-main} would immediately follow from Lemma \ref{l-g1} and Lemma \ref{l-g2}. Consider a positive smooth function $\psi\in C^\infty (M)$ and a positive constant $k>0$, and define
$$
g' := \psi g,\quad g'' := g' +d(k\psi)\otimes d(k\psi).
$$
If we fix $t>0$ and apply Lemma \ref{l-a2} to the metric $g'$ with $\vphi=k\psi$, we obtain that if
\begin{align*}
\Phi_M:=\int_M &\left(R_{g'}+t|W_{g'}|_{k\psi}\right)\,dV_{g'} + t\int_M |E_{g'}(k\psi)|_{k\psi}\,dV_{g'} -\int_M \frac{R'_{ij}\nabla_{g'}^i\psi\nabla_{g'}^j\psi}{1/k^2+|\nabla_{g'} \psi|_{g'}^2}\,dV_{g'}\\
& +\frac{n-1}{n-2}\int_M \left[\frac{\nabla^{g'}_{ip}\psi\nabla_{g'}^p\psi\nabla^{g'}_{iq}\psi\nabla_{g'}^q\psi}{(1/k^2+|\nabla_{g'}\psi|_{g'}^2)^2} -\frac{|\nabla^{g'}_{ij}\psi\nabla_{g'}^i\psi\nabla_{g'}^j\psi|^2}{(1/k^2+|\nabla_{g'}\psi|_{g'}^2)^3} \right]\,dV_{g'}<0,
\end{align*}
then there exists a (unique) $C^{2,\alpha}$ metric $\gt\in[g'']$ such that $F_{\gt}\equiv-1$. Therefore, to prove Theorem \ref{t-main}, it is sufficient to show that $\Phi_M<0$ for some positive smooth function $\psi$ and positive constant $k$ (concerning the regularity of the metric, see the end of the proof). Let
$$
f:=\psi^{(n-2)/2}.
$$
With respect to the metric $g$, by standard formulas for conformal transformations (see \cite[Chapter 5]{catmasbook}), we have
\begin{align}\label{e-confinv}\nonumber
R_{g'}&=\frac{1}{\psi}\left(R_g-\frac{2(n-1)}{n-2}\frac{\Delta f}{f}+\frac{n-1}{n-2}\frac{|\nabla f|^2}{f^2}\right),\\\nonumber
R'_{ij}&=R_{ij}-\frac{f_{ij}}{f}+\frac{n-1}{n-2}\frac{f_if_j}{f^2}-\frac{1}{n-2}\frac{\Delta f}{f}g_{ij},\\
W'_{ijkt}&=\frac{1}{\psi} W_{ijkt},\\\nonumber
dV_{g'}&=\psi^{n/2}\,dV_g=f \psi\,dV_g,\\\nonumber
\nabla^{g'}_{ij} \psi &= \psi_{ij} -\frac{1}{\psi}\left(\psi_i\psi_j-\frac12|\nabla\psi|^2 g_{ij}\right).  
\end{align}
Moreover, since 
$$
g''= g' +d(k\psi)\otimes d(k\psi) = \psi \left[ g + d(2k\sqrt{\psi})\otimes d(2k\sqrt{\psi})\right] =:\psi \gb,
$$
from the conformal invariance of the Weyl curvature and Lemma \ref{l-a0}, we obtain
$$
W'_{ijkt}+E'(k\psi)_{ijkt} = W''_{ijkt} = \frac{1}{\psi}\overline{W}_{ijkt} = \frac{1}{\psi}\left[W_{ijkt}+E(2k\sqrt{\psi})_{ijkt} \right]=W'_{ijkt}+\frac{1}{\psi}E(2k\sqrt{\psi})_{ijkt}.
$$
Therefore, the "error term" of Weyl tensor under Aubin's deformation of the metric satisfies the following {\em conformal invariance}:
\begin{equation}\label{e-confinvE}
E_{g'}(k\psi) =\frac{1}{\psi} E_g (2k\sqrt{\psi}).
\end{equation}
In particular, we have the relations
$$
|W_{g'}|_{k\psi}=|W_{g'}|_{g'+d(k\psi)\otimes d(k\psi)} = \frac{1}{\psi}|W_{g'}|_{\gb} = \frac{1}{\psi^2}|W_g|_{\gb}
$$
and
$$
|E_{g'}(k\psi)|_{k\psi}=\frac{1}{\psi}|E_{g'}(k\psi)|_{\gb} =\frac{1}{\psi^2} |E_{g}(2k\sqrt{\psi})|_{\gb}.
$$
Following the computation in \cite{aub2}, putting all together we obtain
\begin{align*}
\Phi_M:=&\int_M \left(R_{g}+\frac{t}{\psi}|W_{g}|_{\gb}-\frac{R_{ij}\psi_i\psi_j}{\psi/k^2+|\nabla\psi|^2}\right)f\,dV_{g} + t\int_M \frac{f}{\psi}|E_{g}(2k\sqrt{\psi})|_{\gb}\,dV_{g} \\
&+\int_M \frac{f_{ij}\psi^i\psi^j}{\psi/k^2+|\nabla\psi|^2}\,dV_g+\frac{n-1}{n-2}\int_M \frac{|\nabla f|^2}{f}\,dV_g-\frac{n-1}{n-2}\int_M \frac{|f_i\psi^i|^2}{f(\psi/k^2+|\nabla\psi|^2)}\,dV_g\\
&+\frac{1}{n-2}\int_M \frac{\Delta f |\nabla\psi|^2}{\psi/k^2+|\nabla\psi|^2}\,dV_g\\
&+\frac{n-1}{n-2}\int_M \left[\frac{\psi_{ip}\psi^p\psi_{iq}\psi^q}{(\psi/k^2+|\nabla\psi|^2)^2} -\frac{|\psi_{ij}\psi^i\psi^j|^2}{(\psi/k^2+|\nabla\psi|^2)^3} \right]f\,dV_{g}\\
&+\frac{1}{k^2}\frac{n-1}{n-2}\int_M \frac{\tfrac14 |\nabla\psi|^6-|\nabla\psi|^2(\psi_{ij}\psi^i\psi^j)\psi}{(\psi/k^2+|\nabla\psi|^2)^3}f\,dV_g.
\end{align*}
Moreover, since
$$
\int_M \frac{|\nabla f|^2}{f}\,dV_g-\int_M \frac{|f_i\psi^i|^2}{f(\psi/k^2+|\nabla\psi|^2)}\,dV_g=\frac{1}{k^2}\frac{n-2}{2}\int_M \frac{f_i\psi^i}{\psi/k^2+|\nabla\psi|^2}\,dV_g,
$$
$$
\int_M \frac{\Delta f |\nabla\psi|^2}{\psi/k^2+|\nabla\psi|^2}\,dV_g = -\frac{1}{k^2}\int_M \frac{\psi\Delta f}{\psi/k^2+|\nabla\psi|^2}\,dV_g,
$$
we finally get
\begin{align}\label{e-int}\nonumber
\Phi_M:=&\int_M \left(R_{g}+\frac{t}{\psi}|W_{g}|_{\gb}-\frac{R_{ij}\psi_i\psi_j}{\psi/k^2+|\nabla\psi|^2}\right)f\,dV_{g} + t\int_M \frac{f}{\psi}|E_{g}(2k\sqrt{\psi})|_{\gb}\,dV_{g} \\\nonumber
&+\int_M \frac{f_{ij}\psi^i\psi^j}{\psi/k^2+|\nabla\psi|^2}\,dV_g\\
&+\frac{1}{k^2}\frac{n-1}{2}\int_M \frac{f_i\psi^i}{\psi/k^2+|\nabla\psi|^2}\,dV_g-\frac{1}{k^2}\int_M \frac{\psi\Delta f}{\psi/k^2+|\nabla\psi|^2}\,dV_g\\\nonumber
&+\frac{n-1}{n-2}\int_M \left[\frac{\psi_{ip}\psi^p\psi_{iq}\psi^q}{(\psi/k^2+|\nabla\psi|^2)^2} -\frac{|\psi_{ij}\psi^i\psi^j|^2}{(\psi/k^2+|\nabla\psi|^2)^3} \right]f\,dV_{g}\\\nonumber
&+\frac{1}{k^2}\frac{n-1}{n-2}\int_M \frac{\tfrac14 |\nabla\psi|^6-|\nabla\psi|^2(\psi_{ij}\psi^i\psi^j)\psi}{(\psi/k^2+|\nabla\psi|^2)^3}f\,dV_g.
\end{align}

\medskip

\subsection*{Step 2.} Let $y=y(x)$ be a fixed smooth real function such that 
$$
\begin{cases}
y(-x)=y(x)\quad &\forall\, x\in\RR \\
y(x)=1\quad&\forall\, |x|\geq 1\\
y(x)\geq\delta>0\quad&\forall\, x\in\RR\\
y'(x)>0\quad&\forall\, 0<x<1\\
y'(x)\geq 1\quad&\forall\, (1/4)^{1/(n-1)}\leq x\leq (3/4)^{1/(n-1)}.
\end{cases}
$$
Let $p\in M$ and consider a local, normal, geodesic polar coordinate system around $p$: $\rho,\phi_1,\cdots,\phi_{n-1}$. We have $g_{\rho\rho}=1$, $g_{\rho i}=0$, $g_{ij}=\delta_{ij}+\rho^2 a_{ij}$, $g^{\rho\rho}=1$ (from now on, the indices $i=1,\ldots, n-1$ correspond to the coordinate $\phi_i$). The coefficient $a_{ij}$ are of order $1$. In particular, we have that the Christoffel symbols of the metric $g$ satisfy
\begin{equation}\label{e-chr}
\Gamma_{\rho\rho}^{\rho}=0,\quad \Gamma_{\rho i}^{\rho}=0,\quad \Gamma_{ij}^{\rho}=-\frac{\rho}{2}\left( a_{ij}+\rho \partial_{\rho}a_{ij}\right).
\end{equation}
Let $B_r=B_r(p)$ be the geodesic ball centered at $p$ of radius $0<r<r_0$, with $r_0$ such that $B_r\subset M$. For $p'\in B_r$, we choose
$$
f(p'):=y\left(\frac{\rho}{r}\right),\quad \rho=\text{dist}_g(p',p).
$$
In particular, from \eqref{e-chr}, we have
\begin{equation}\label{e-f1}
f_{\rho}(p')=\frac{1}{r}y'\left(\frac{\rho}{r}\right),\quad f_i(p')=0,
\end{equation}
\begin{equation}\label{e-f1}
f_{\rho\rho}(p')=\frac{1}{r^2}y''\left(\frac{\rho}{r}\right), \quad f_{\rho i}(p')=0, \quad f_{ij}(p')= \frac{\rho}{2r}\left( a_{ij}+\rho \partial_{\rho}a_{ij}\right)y'\left(\frac{\rho}{r}\right).
\end{equation}
From now on, to simplify the expressions, we will omit the dependence of the functions: it will be clear that if $f$, $f_\rho$, etc. are computed at $p'\in B_r$, then $y, y', y''$ will be computed at $\rho/r$ with $\rho=\text{dist}_g(p',p)$. Moreover, we will denote by $C=C(n,\delta, t, p)>0$ some universal positive constant independent of $r$ and $k$. 

Since $0\leq\rho<r$, we have
$$
f_{\rho}=\frac{y'}{r},\quad f_{i}=0, \quad f_{\rho\rho}=\frac{y''}{r^2},\quad f_{\rho i}=0,\quad |f_{ij}|\leq Cr f_{\rho} \leq C y' \leq C.
$$
Thus, using that $\psi=f^{2/(n-2)}$ and $0<\delta\leq f\leq 1$, we get
\begin{equation}\label{e-dpsi}
C^{-1}\frac{y'}{r}\leq \psi_{\rho}\leq C\frac{y'}{r},\quad \psi_{i}=0, \quad |\psi_{\rho\rho}|\leq \frac{C}{r^2},\quad \psi_{\rho i}=0,\quad |\psi_{ij}|\leq Cr\psi_{\rho}\leq C y' \leq C.
\end{equation}
In particular
$$
C^{-1}\frac{(y')^2}{r^2}\leq|\nabla \psi|^2=\psi_{\rho}^2\leq C\frac{(y')^2}{r^2}.
$$

\medskip

\subsection*{Step 3.} From now on,  we consider indices $a,b=\rho,1,\ldots,n-1$, while $i,j=1,\ldots,n-1$. We will estimate the terms in \eqref{e-int} not involving the Weyl curvature, restricted to the ball $B_r$.

We have
$$
-\frac{R_{ab}\psi^a\psi^b}{\psi/k^2+|\nabla\psi|^2}=-\frac{R_{\rho\rho}\psi_{\rho}^2}{\psi/k^2+\psi_{\rho}^2}=-R_{\rho\rho}-\frac{1}{k^2}\frac{\psi R_{\rho\rho}}{\psi/k^2+\psi_{\rho}^2}\leq -R_{\rho\rho}+\frac{1}{k^2}\frac{C_1r^2}{r^2/k^2+C_2(y')^2}
$$
and thus
\begin{equation}\label{e-i1}
-\int_{B_r}\frac{R_{ab}\psi_a\psi_b}{\psi/k^2+|\nabla\psi|^2}f\,dV_g \leq C|B_r|+\frac{1}{k^2}\Theta
\end{equation}
where $|B_r|$ denotes the volume of $B_r$ and $\Theta=\Theta(p,1/k,r)>0$ will denote a continuous function in $1/k$ and $r$, for $0<r<r_0$ and $0\leq 1/k <1$. 

Also
$$
\frac{f_{ab}\psi^a\psi^b}{\psi/k^2+|\nabla\psi|^2}=\frac{f_{\rho\rho}\psi_{\rho}^2}{\psi/k^2+\psi_{\rho}^2}=f_{\rho\rho}-\frac{1}{k^2}\frac{\psi f_{\rho\rho}}{\psi/k^2+\psi_{\rho}^2}\leq \frac{y''}{r^2}+\frac{1}{k^2}\frac{C_1}{r^2/k^2+C_2(y')^2}
$$
and integrating over $B_r$, we get
\begin{equation}\label{e-i2}
\int_{B_r}\frac{f_{ab}\psi^a\psi^b}{\psi/k^2+|\nabla\psi|^2}\,dV_g \leq \frac{1}{r^2}\int_{B_r} y''\,dV_g+\frac{1}{k^2}\Theta.
\end{equation}

We have 
$$
\frac{f_a\psi^a}{\psi/k^2+|\nabla\psi|^2}\leq C \frac{\psi_{\rho}^2}{\psi/k^2+\psi_{\rho}^2}\leq C,\quad -\frac{\psi\Delta f}{\psi/k^2+|\nabla\psi|^2}\leq \frac{C_1}{r^2/k^2+C_2(y')^2}
$$
and therefore
\begin{equation}\label{e-i3}
\frac{1}{k^2}\frac{n-1}{2}\int_{B_r} \frac{f_a\psi^a}{\psi/k^2+|\nabla\psi|^2}\,dV_g-\frac{1}{k^2}\int_{B_r} \frac{\psi\Delta f}{\psi/k^2+|\nabla\psi|^2}\,dV_g \leq \frac{1}{k^2}\Theta.
\end{equation}

Moreover
\begin{align*}
\frac{\psi_{ab}\psi^b\psi_{ac}\psi^c}{(\psi/k^2+|\nabla\psi|^2)^2} -\frac{|\psi_{ab}\psi^a\psi^b|^2}{(\psi/k^2+|\nabla\psi|^2)^3} &= \frac{\psi_{\rho\rho}^2\psi_{\rho}^2}{(\psi/k^2+\psi_{\rho}^2)^2} -\frac{\psi_{\rho\rho}^2\psi_{\rho}^4}{(\psi/k^2+\psi_{\rho}^2)^3}=\frac{1}{k^2}\frac{\psi \psi_{\rho\rho}^2\psi_{\rho}^2}{(\psi/k^2+\psi_{\rho}^2)^3}\\
&\leq \frac{1}{k^2}\frac{C_1}{(r^2/k^2+C_2(y')^2)^3}
\end{align*}
and thus
\begin{equation}\label{e-i4}
\frac{n-1}{n-2}\int_{B_r} \left[\frac{\psi_{ab}\psi^b\psi_{ac}\psi^c}{(\psi/k^2+|\nabla\psi|^2)^2} -\frac{|\psi_{ab}\psi^a\psi^b|^2}{(\psi/k^2+|\nabla\psi|^2)^3} \right]f\,dV_{g}\leq  \frac{1}{k^2}\Theta.
\end{equation}

Finally, reasoning as before, one has 
\begin{equation}\label{e-i5}
\frac{1}{k^2}\frac{n-1}{n-2}\int_{B_r} \frac{\tfrac14 |\nabla\psi|^6-|\nabla\psi|^2(\psi_{ab}\psi^a\psi^b)\psi}{(\psi/k^2+|\nabla\psi|^2)^3}f\,dV_g \leq \frac{1}{k^2}\Theta.
\end{equation}
Therefore, since
$$
\int_{B_r} R_g f\,dV_g \leq C |B_r|,
$$
using \eqref{e-i1},\eqref{e-i2},\eqref{e-i3} and \eqref{e-i4} in \eqref{e-int}, we obtain that
\begin{align}\label{e-ie1}
\Phi_{B_r} \leq & \,t\int_{B_r}\frac{f}{\psi}\left( |W_{g}|_{\gb}+|E_{g}(2k\sqrt{\psi})|_{\gb}\right)\,dV_{g} +C|B_r|+\frac{1}{r^2}\int_{B_r} y''\,dV_g+\frac{1}{k^2}\Theta,
\end{align}
where $\Phi_{B_r}$ denotes the quantity defined in \eqref{e-int} restricted to $B_r$. Note that this intermediate estimate, when $t=0$, coincides with the one of Aubin in \cite{aub2}. 

\medskip

\subsection*{Step 4.} We now estimate the remaining terms in \eqref{e-int} which involve the Weyl curvature. Since
$$
\gb=g + d(2k\sqrt{\psi})\otimes d(2k\sqrt{\psi}),
$$
from Lemma \ref{l-a0}, we have
$$
\gb^{\rho\rho}=\frac{1}{1+4k^2 (\sqrt{\psi})_{\rho}^2},\quad \gb^{\rho i} =0, \quad \gb^{ij}=g^{ij}.
$$
Therefore, for any Riemann-type $4$-tensor, $T$, we obtain
\begin{equation}\label{e-ngb}
|T_{g}|^2_{\gb}=\sum_{i,j,k,t=1}^{n-1}T_{ijkt}^2+\frac{4}{1+4k^2 (\sqrt{\psi})_{\rho}^2}\sum_{i,k,t=1}^{n-1}T_{i\rho kt}^2 + \frac{4}{\left[1+4k^2 (\sqrt{\psi})_{\rho}^2\right]^2}\sum_{i,k=1}^{n-1}T_{i\rho k\rho}^2.
\end{equation}
In particular (this follows immediately from $\gb\geq g$):
$$
|W_g|_{\gb} \leq |W_g|_g\quad\text{and}\quad t\int_{B_r}\frac{f}{\psi}|W_g|_{\gb}\,dV_g \leq C |B_r|.
$$
From \eqref{e-ie1}, we obtain
\begin{equation}\label{e-ie2}
\Phi_{B_r} \leq  t\int_{B_r}\frac{f}{\psi}|E_{g}(2k\sqrt{\psi})|_{\gb}\,dV_{g} +C|B_r|+\frac{1}{r^2}\int_{B_r} y''\,dV_g+\frac{1}{k^2}\Theta.
\end{equation}
Concerning the first integral, we have the following key estimate:
\begin{lemma}\label{l-kest} We have
$$
t\int_{B_r}\frac{f}{\psi}|E_{g}(2k\sqrt{\psi})|_{\gb}\,dV_{g} \leq C|B_r|+\frac{1}{k^2}\Theta,
$$
for some $C=C(n,\delta, t, p)>0$ and $\Theta=\Theta(p,1/k,r)>0$ as above.
\end{lemma}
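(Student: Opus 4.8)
The plan is to estimate $|E_g(h)|_{\gb}$ for the profile $h:=2k\sqrt{\psi}$ appearing in \eqref{e-confinvE}, directly from the explicit formula of Lemma~\ref{l-a0}, and then integrate over $B_r$. Since $\tfrac f\psi=f^{(n-4)/(n-2)}$ lies between two positive constants (recall $\delta\le f\le 1$), it is enough to bound $\int_{B_r}|E_g(h)|_{\gb}\,dV_g$. In the geodesic polar frame of Step~2 the function $h$ is radial, so $h_i=0=h_{\rho i}$ for angular indices, $|\nabla h|^2=h_\rho^2=:w-1$, and from the formulas of Step~2 one gets $C^{-1}ky'/r\le |h_\rho|\le Cky'/r$, $|h_{\rho\rho}|\le Ck/r^2$, and $|h_{ij}|\le C\rho\,|h_\rho|$ as components. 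The decisive bookkeeping tool is \eqref{e-ngb}: it damps every component of a Riemann‑type tensor that carries a $\rho$ in an even slot by $1/w$ or $1/w^2$; together with the elementary inequalities $h_\rho^2/w\le 1$, $|h_{ij}|^2/w\le C r^2$, this is what keeps most contributions bounded.

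I would split $B_r=\Omega_-\cup\Omega_+$, with $\Omega_-=\{|\nabla h|<1\}$ and $\Omega_+=\{|\nabla h|\ge 1\}$. On $\Omega_+$ one has $w\simeq h_\rho^2\simeq k^2(y')^2/r^2$, so each factor $1/w$ is a factor $Cr^2/(k^2(y')^2)$; on $\Omega_-$ one has $|\nabla h|<1$, which (since $y'(0)=0$, $y'(1)=0$ and we may fix the vanishing orders of $y$) confines $\Omega_-$ to a neighbourhood of the center where $y'(x)\sim x$ and to a thin collar near $\partial B_r$ where $y$ is flat. The terms of $E_g(h)$ fall into two groups. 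The first consists of the terms linear in the curvature of $g$ times a product of first derivatives of $h$ (the $R_{ik}f_jf_t$, $\tfrac R{(n-1)(n-2)}g_{ik}f_jf_t$, $\tfrac{f^pf^q}{w}R_{ipkq}(\cdots)$, $\tfrac{R_{pq}f^pf^q}{w}(\cdots)$ pieces); using $|W_g|,|Ric_g|,|R_g|\le C$ together with $h_\rho^2/w\le 1$ and \eqref{e-ngb} these are $\le C$ pointwise, hence contribute $\le C|B_r|$.

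The real work is the second group: the terms quadratic in the Hessian of $h$, namely $f_{ik}f_{jt}/w$, $\tfrac1{w(n-2)}[(\Delta f)f_{ik}-f_{ip}f^p_k](g+h\otimes h)$, $\tfrac1{w(n-1)(n-2)}[(\Delta h)^2-|\nabla^2 h|^2]\,g\kn\gb$, $\tfrac{f^pf^q}{w^2(n-2)}(f_{ik}f_{pq}-f_{ip}f_{kq})(\cdots)$, and $\tfrac2{w^2(n-1)(n-2)}[(\Delta h)f^pf^qf_{pq}-f^pf_{pq}f^{qr}f_r]\,g\kn\gb$. The naive bounds are useless here because $|\nabla^2 h|\sim k/r^2$ is not small; one must exploit that $E_g(h)=W_{\gb}-W_g$ is a difference of \emph{Weyl} tensors, so that when these pieces are assembled the bare $|\nabla^2 h|^2$‑contributions telescope: in the relevant components (e.g.\ the $i\rho j\rho$‑component, where \eqref{e-ngb} contributes the $1/w^2$ weight) the coefficients of $h_{\rho\rho}^2$ and of $\rho^{-2}h_\rho^2$ cancel, leaving only monomials in which every surviving $\nabla^2 h$ is divided by a power of $w$ and paired with a factor $h_\rho$. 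After these cancellations $|E_g(h)|_{\gb}$ is bounded, on $\Omega_-$, by $C$ uniformly (whence $\int_{\Omega_-}\le C|\Omega_-|\le\tfrac1{k^2}\Theta$ as $|\Omega_-|\to0$), and on $\Omega_+$ by $C$ plus terms of the schematic shape $C\,r^2k^{-2}(y')^{-m}\cdot(\text{bounded})$ with $m\le 3$; passing to the radial variable $x=\rho/r$ turns the latter into $\tfrac C{k^2}\,r^{n}\!\int_0^1 (y'(x))^{-m}x^{n-1}\,dx$‑type quantities, which are finite by the chosen vanishing/flatness behaviour of $y$, hence $\le\tfrac1{k^2}\Theta(p,1/k,r)$. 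Multiplying by $t$ gives the statement.

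The main obstacle is exactly this last group of terms: one has to identify the correct algebraic cancellations (the Weyl — rather than Riemann — structure of $E_g(h)$), carry out the attendant but lengthy computation of the $\gb$‑norm in the polar frame, and then verify that the profile $y$ can be chosen so that all the resulting radial integrals converge with the claimed dependence on $r$ and $k$; by contrast the curvature‑linear terms and the bound $t\int_{B_r}\tfrac f\psi|W_g|_{\gb}\,dV_g\le C|B_r|$ already used in \eqref{e-ie2} are routine.
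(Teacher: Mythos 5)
Your overall strategy coincides with the paper's: work in the radial frame of Step 2, use \eqref{e-ngb} to weight the components of $E_g(2k\sqrt{\psi})$, check that the curvature--linear blocks of Lemma~\ref{l-a0} contribute $O(1)$, and isolate the Hessian--quadratic blocks as the only source of trouble. You also correctly see that naive bounds on those blocks fail and that an algebraic cancellation is required. But that cancellation \emph{is} the content of the lemma, and your proposal asserts it instead of proving it; moreover, the mechanism you name is not the one that operates. There are no bare $h_{\rho\rho}^2$ monomials to cancel except inside $(\Delta h)^2-|\nabla^2h|^2$, where they cancel identically for trivial reasons; the genuinely dangerous terms are the cross terms $h_{\rho\rho}h_{ik}/w$ and $h_{\rho\rho}h^p_p/w$ produced by writing $\Delta h=h_{\rho\rho}+h^p_p$, which are of size $y'/(r^2/k^2+C(y')^2)$ and hence not uniformly bounded in $k$ (this happens on your $\Omega_-$ as well, where the $1/w$ weights are useless, so the splitting $\Omega_\pm$ buys nothing). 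The paper disposes of them by an explicit term-by-term computation: pairing the fourth block with the sixth and the fifth with the last in $E_{ijkt}$, and the analogous pairings in $E_{i\rho k\rho}$, one finds that every surviving $\eta_{\rho\rho}$-bearing term acquires an extra factor $(1+k^2\eta_\rho^2)^{-1}$ and thus takes the form $\frac{1}{k^2}\,\eta_{\rho\rho}(\cdots)(1/k^2+\eta_\rho^2)^{-2}$, which is precisely what feeds the $\frac{1}{k^2}\Theta$ term. Invoking ``$E_g(h)=W_{\gb}-W_g$ is a difference of Weyl tensors'' does not substitute for this check: the cancellation is between the Riemann-variation block $f_{ik}f_{jt}/w$ and the Ricci/scalar correction blocks, and nothing in the trace-free structure of $W$ hands it to you for free. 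Since you explicitly defer this computation, the argument is incomplete at its decisive point.

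There is also a step that would fail as written: the claim that the limiting radial integrals $\int_0^1 (y'(x))^{-m}x^{n-1}\,dx$, $m\le 3$, converge ``by the chosen vanishing/flatness behaviour of $y$''. Smoothness of $y$ together with $y\equiv 1$ for $|x|\ge 1$ forces $y'$ to vanish to infinite order at $x=1$, so $(y')^{-m}$ is non-integrable near $\partial B_r$ for every $m>0$, no matter how $y$ is chosen. One cannot drop the regularizing term: the correct bound keeps the denominator $r^2/k^2+C_2(y')^2$ intact (as in the paper's estimate $|E|_{\gb}\le C+\frac{1}{k^2}C_1[r^2/k^2+C_2(y')^2]^{-2}$) and packages the resulting $k$-dependent integral into $\Theta(p,1/k,r)$, rather than passing to the pointwise $k\to\infty$ limit of the integrand. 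Likewise, the parenthetical $|\Omega_-|\le\frac{1}{k^2}\Theta$ is unjustified (the measure of $\{y'<r/k\}$ near $x=1$ decays slower than any power of $1/k$); the bounded part of the integrand should simply be absorbed into $C|B_r|$.
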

\begin{proof} We set $\eta=2\sqrt{\psi}$ and $E=E_g(2k\sqrt{\psi})=E_g(k\eta)$. From \eqref{e-dpsi}, since $0<\delta^{2/(n-2)}\leq \psi\leq 1$, we have
\begin{equation}\label{e-deta}
C^{-1}\frac{y'}{r}\leq \eta_{\rho}\leq C\frac{y'}{r},\quad \eta_{i}=0, \quad |\eta_{\rho\rho}|\leq \frac{C}{r^2},\quad \eta_{\rho i}=0,\quad |\eta_{ij}|\leq Cr\eta_{\rho}\leq C y' \leq C.
\end{equation}
Firstly, from Lemma \ref{l-a0} and \eqref{e-deta}, we get
\begin{align*}
E_{ijkt}&=\frac{k^2}{1+k^2 \eta_\rho^2}\pa{\eta_{ik}\eta_{jt}-\eta_{it}\eta_{jk}} \\    
&+\frac{k^2\eta_\rho^2}{(1+k^2 \eta_\rho^2)(n-2)}\pa{R_{i\rho k\rho}g_{jt}-R_{i\rho t\rho}g_{jk}+R_{j\rho t\rho}g_{ik}-R_{j\rho k\rho}g_{it}} \\  
&-\frac{2k^2R_{\rho\rho}\eta_\rho^2}{(1+k^2 \eta_\rho^2)(n-1)(n-2)}\pa{g_{ik}g_{jt}-g_{it}g_{jk}} \\  
&-\frac{k^2}{(1+k^2 \eta_\rho^2)(n-2)}\Big[\pa{(\Delta \eta) \eta_{ik}-\eta_{ip}\eta^p_k}g_{jt}-\pa{(\Delta \eta) \eta_{it}-\eta_{ip}\eta^p_t}g_{jk}\Big.\\
&\hspace{4cm}\Big.+\pa{(\Delta \eta) \eta_{jt}-\eta_{jp}\eta^p_t}g_{ik}-\pa{(\Delta \eta) \eta_{jk}-\eta_{jp}\eta^p_k}g_{it}\Big] \\  
&+\frac{k^2}{(1+k^2 \eta_\rho^2)(n-1)(n-2)}\sq{\pa{\Delta \eta}^2-\abs{\nabla^2 \eta}^2}\pa{g_{ik}g_{jt}-g_{it}g_{jk}} \\  
&+\frac{k^4\eta_\rho^2\eta_{\rho\rho}}{(1+k^2 \eta_\rho^2)^2(n-2)}\pa{\eta_{ik}g_{jt}-\eta_{it}g_{jk}+\eta_{jt}g_{ik}-\eta_{jk}g_{it}}  \\  
&-\frac{2k^4\eta_{\rho}^2\eta_{\rho\rho}}{(1+k^2 \eta_\rho^2)^2(n-1)(n-2)}\pa{\Delta \eta-\eta_{\rho\rho}}\pa{g_{ik}g_{jt}-g_{it}g_{jk}}.
\end{align*}
Since $\Delta\eta = \eta_{\rho\rho}+\eta_{p}^p$, we can simplify the expression, obtaining
\begin{align*}
E_{ijkt}&=\frac{k^2}{1+k^2 \eta_\rho^2}\pa{\eta_{ik}\eta_{jt}-\eta_{it}\eta_{jk}} \\    
&+\frac{k^2\eta_\rho^2}{(1+k^2 \eta_\rho^2)(n-2)}\pa{R_{i\rho k\rho}g_{jt}-R_{i\rho t\rho}g_{jk}+R_{j\rho t\rho}g_{ik}-R_{j\rho k\rho}g_{it}} \\  
&-\frac{2k^2R_{\rho\rho}\eta_\rho^2}{(1+k^2 \eta_\rho^2)(n-1)(n-2)}\pa{g_{ik}g_{jt}-g_{it}g_{jk}} \\  
&-\frac{k^2}{(1+k^2 \eta_\rho^2)(n-2)}\Big[\pa{\eta_{p}^p\eta_{ik}-\eta_{ip}\eta^p_k}g_{jt}-\pa{\eta_{p}^p \eta_{it}-\eta_{ip}\eta^p_t}g_{jk}\Big.\\
&\hspace{4cm}\Big.+\pa{\eta_{p}^p \eta_{jt}-\eta_{jp}\eta^p_t}g_{ik}-\pa{\eta_{p}^p \eta_{jk}-\eta_{jp}\eta^p_k}g_{it}\Big] \\  
&+\frac{k^2}{(1+k^2 \eta_\rho^2)(n-1)(n-2)}\sq{(\eta_{p}^p)^2+2\eta_{\rho\rho}\eta_{p}^p-|\eta_{ij}|^2}\pa{g_{ik}g_{jt}-g_{it}g_{jk}} \\  
&-\frac{k^2\eta_{\rho\rho}}{(1+k^2 \eta_\rho^2)^2(n-2)}\pa{\eta_{ik}g_{jt}-\eta_{it}g_{jk}+\eta_{jt}g_{ik}-\eta_{jk}g_{it}}  \\  
&-\frac{2k^4\eta_{\rho}^2\eta_{\rho\rho}\eta_{p}^p}{(1+k^2 \eta_\rho^2)^2(n-1)(n-2)}\pa{g_{ik}g_{jt}-g_{it}g_{jk}}.
\end{align*}
In particular, we have simplified the fourth block with the sixth one. Coupling the fifth block with the last one, we obtain
\begin{align*}
E_{ijkt}&=\frac{1}{1/k^2+\eta_\rho^2}\pa{\eta_{ik}\eta_{jt}-\eta_{it}\eta_{jk}} \\    
&+\frac{\eta_\rho^2}{(1/k^2+\eta_\rho^2)(n-2)}\pa{R_{i\rho k\rho}g_{jt}-R_{i\rho t\rho}g_{jk}+R_{j\rho t\rho}g_{ik}-R_{j\rho k\rho}g_{it}} \\  
&-\frac{2R_{\rho\rho}\eta_\rho^2}{(1/k^2+\eta_\rho^2)(n-1)(n-2)}\pa{g_{ik}g_{jt}-g_{it}g_{jk}} \\  
&-\frac{1}{(1/k^2+\eta_\rho^2)(n-2)}\Big[\pa{\eta_{p}^p\eta_{ik}-\eta_{ip}\eta^p_k}g_{jt}-\pa{\eta_{p}^p \eta_{it}-\eta_{ip}\eta^p_t}g_{jk}\Big.\\
&\hspace{4cm}\Big.+\pa{\eta_{p}^p \eta_{jt}-\eta_{jp}\eta^p_t}g_{ik}-\pa{\eta_{p}^p \eta_{jk}-\eta_{jp}\eta^p_k}g_{it}\Big] \\  
&+\frac{1}{(1/k^2+\eta_\rho^2)(n-1)(n-2)}\sq{(\eta_{p}^p)^2-|\eta_{ij}|^2}\pa{g_{ik}g_{jt}-g_{it}g_{jk}} \\  
&-\frac{1}{k^2}\frac{\eta_{\rho\rho}}{(1/k^2+\eta_\rho^2)^2(n-2)}\pa{\eta_{ik}g_{jt}-\eta_{it}g_{jk}+\eta_{jt}g_{ik}-\eta_{jk}g_{it}}  \\  
&+\frac{1}{k^2}\frac{2\eta_{\rho}^2\eta_{\rho\rho}\eta_{p}^p}{(1/k^2+\eta_\rho^2)^2(n-1)(n-2)}\pa{g_{ik}g_{jt}-g_{it}g_{jk}}.
\end{align*}
Using \eqref{e-deta}, since $|\eta_{ik}\eta_{jt}|\leq C \eta_{\rho}^2$, it is easy to see that the first five blocks are bounded by $C=C(n,\delta, t, p)>0$ while the last two are controlled by
$$
\frac{1}{k^2}\frac{C_1}{[r^2/k^2+C_2(y')^2]^2}.
$$
Therefore
\begin{equation}\label{e-eE1}
|E_{ijkt}| \leq C + \frac{1}{k^2}\frac{C_1}{[r^2/k^2+C_2(y')^2]^2}.
\end{equation}
Secondly, from Lemma \ref{l-a0} and \eqref{e-deta}, we get
\begin{equation}\label{e-eE2}
E_{i\rho kt} =0.
\end{equation}
Lastly, using again Lemma \ref{l-a0} and \eqref{e-deta}, we obtain
\begin{align*}
E_{i\rho k\rho}&=\frac{k^2\eta_{ik}\eta_{\rho\rho} }{1+k^2 \eta_\rho^2}+\frac{k^2R_{ik}\eta_{\rho}^2}{n-2}+\frac{k^2 R g_{ik}\eta_{\rho}^2}{(n-1)(n-2)}+\frac{k^2R_{i\rho k\rho}\eta_{\rho}^2}{n-2}-\frac{2k^2R_{\rho\rho}g_{ik}\eta_{\rho}^2}{(n-1)(n-2)}\\  
&-\frac{k^2}{n-2}\sq{(\Delta \eta) \eta_{ik}-\eta_{ip}\eta^p_k}-\frac{k^2g_{ik}\eta_{\rho\rho}}{(1+k^2 \eta_\rho^2)(n-2)}\pa{\Delta \eta-\eta_{\rho\rho}} \\
&+\frac{k^2g_{ik}}{(n-1)(n-2)}\sq{\pa{\Delta \eta}^2-\abs{\nabla^2 \eta}^2}\\  
&+\frac{k^4\eta_{\rho}^2\eta_{ik}\eta_{\rho\rho}}{(1+k^2 \eta_\rho^2)(n-2)}-\frac{2k^4g_{ik}\eta_{\rho}^2\eta_{\rho\rho}}{(1+k^2 \eta_\rho^2)(n-1)(n-2)}\pa{\Delta \eta-\eta_{\rho\rho}}.
\end{align*}
Since $\Delta\eta = \eta_{\rho\rho}+\eta_{p}^p$, we can simplify this expression, obtaining
\begin{align*}
E_{i\rho k\rho}&=\frac{k^2\eta_{ik}\eta_{\rho\rho} }{1+k^2 \eta_\rho^2}+\frac{k^2R_{ik}\eta_{\rho}^2}{n-2}+\frac{k^2 R g_{ik}\eta_{\rho}^2}{(n-1)(n-2)}+\frac{k^2R_{i\rho k\rho}\eta_{\rho}^2}{n-2}-\frac{2k^2R_{\rho\rho}g_{ik}\eta_{\rho}^2}{(n-1)(n-2)}\\  
&-\frac{k^2}{n-2}\sq{\eta_{\rho\rho}\eta_{ik}+\eta_p^p\eta_{ik}-\eta_{ip}\eta^p_k}-\frac{k^2g_{ik}\eta_{\rho\rho}\eta_p^p}{(1+k^2 \eta_\rho^2)(n-2)}\\
&+\frac{k^2g_{ik}}{(n-1)(n-2)}\sq{(\eta_{p}^p)^2+2\eta_{\rho\rho}\eta_{p}^p-|\eta_{ij}|^2}\\  
&+\frac{k^4\eta_{\rho}^2\eta_{ik}\eta_{\rho\rho}}{(1+k^2 \eta_\rho^2)(n-2)}-\frac{2k^4g_{ik}\eta_{\rho}^2\eta_{\rho\rho}\eta_p^p}{(1+k^2 \eta_\rho^2)(n-1)(n-2)}\\
&=\frac{k^2\eta_{ik}\eta_{\rho\rho} }{1+k^2 \eta_\rho^2}+\frac{k^2R_{ik}\eta_{\rho}^2}{n-2}+\frac{k^2 R g_{ik}\eta_{\rho}^2}{(n-1)(n-2)}+\frac{k^2R_{i\rho k\rho}\eta_{\rho}^2}{n-2}-\frac{2k^2R_{\rho\rho}g_{ik}\eta_{\rho}^2}{(n-1)(n-2)}\\  
&-\frac{k^2}{n-2}\sq{\eta_p^p\eta_{ik}-\eta_{ip}\eta^p_k}-\frac{k^2g_{ik}\eta_{\rho\rho}\eta_p^p}{(1+k^2 \eta_\rho^2)(n-2)}+\frac{k^2g_{ik}}{(n-1)(n-2)}\sq{(\eta_{p}^p)^2-|\eta_{ij}|^2}\\  
&+\frac{k^2\eta_{ik}\eta_{\rho\rho}}{(1+k^2 \eta_\rho^2)(n-2)}+\frac{k^2g_{ik}\eta_{\rho\rho}\eta_p^p}{(1+k^2 \eta_\rho^2)(n-1)(n-2)}.
\end{align*}
Rearranging the terms, we get
\begin{align*}
E_{i\rho k\rho}&=\frac{k^2R_{ik}\eta_{\rho}^2}{n-2}+\frac{k^2 R g_{ik}\eta_{\rho}^2}{(n-1)(n-2)}+\frac{k^2R_{i\rho k\rho}\eta_{\rho}^2}{n-2}-\frac{2k^2R_{\rho\rho}g_{ik}\eta_{\rho}^2}{(n-1)(n-2)}\\  
&-\frac{k^2}{n-2}\sq{\eta_p^p\eta_{ik}-\eta_{ip}\eta^p_k}+\frac{k^2g_{ik}}{(n-1)(n-2)}\sq{(\eta_{p}^p)^2-|\eta_{ij}|^2}\\
&+\frac{n-1}{n-2}\frac{k^2\eta_{ik}\eta_{\rho\rho} }{1+k^2 \eta_\rho^2}-\frac{k^2g_{ik}\eta_{\rho\rho}\eta_p^p}{(1+k^2 \eta_\rho^2)(n-1)}.
\end{align*}
Therefore, from \eqref{e-deta}, we deduce
$$
|E_{i\rho k\rho}| \leq C k^2 \eta_{\rho}^2+ \frac{C_1}{r^2/k^2+C_2(y')^2},
$$
and thus
\begin{equation}\label{e-eE3}
\frac{1}{1+k^2\eta_{\rho}^2}|E_{i\rho k\rho}| \leq C + \frac{1}{k^2}\frac{C_1}{\left[r^2/k^2+C_2(y')^2\right]^2}.
\end{equation}
As a consequence, using \eqref{e-ngb} and \eqref{e-eE1}, \eqref{e-eE2}, \eqref{e-eE3}, we obtain
$$
|E_{g}(2k\sqrt{\psi})|_{\gb} \leq C + \frac{1}{k^2}\frac{C_1}{\left[r^2/k^2+C_2(y')^2\right]^2}
$$
which implies
$$
t\int_{B_r}\frac{f}{\psi}|E_{g}(2k\sqrt{\psi})|_{\gb}\,dV_{g} \leq C|B_r|+\frac{1}{k^2}\Theta,
$$
for some $C=C(n,\delta, t, p)>0$ and $\Theta=\Theta(p,1/k,r)>0$.
\end{proof}

\medskip

\subsection*{Step 5.} Using Lemma \ref{l-kest} in \eqref{e-ie2}, we obtain 
\begin{equation}\label{e-ie3}
\Phi_{B_r} \leq  \,C|B_r|+\frac{1}{r^2}\int_{B_r} y''\,dV_g+\frac{1}{k^2}\Theta
\end{equation}
for some $C=C(n,\delta, t, p)>0$ and $\Theta=\Theta(p,1/k,r)>0$. Since, $y'(1)=0$, integrating by parts, we obtain
\begin{align*}
\frac{1}{r^2}\int_{B_r} y''\,dV_g&=-\frac{1}{r}\int_{B_r}y' \partial_{\rho}\log\sqrt{\text{det}g_{ij}}\,dV_g-\frac{n-1}{r}\int_{B_r}\frac{y'}{\rho}\,dV_g\\
&\leq \frac{C}{r}|B_r| -\frac{n-1}{r}\int_{B_r}\frac{y'}{\rho}\,dV_g.
\end{align*}
Hence, from \eqref{e-ie3}, we get
$$
\Phi_{B_r} \leq  \,C\left(1+\frac{1}{r}\right)|B_r|-\frac{n-1}{r}\int_{B_r}\frac{y'}{\rho}\,dV_g+\frac{1}{k^2}\Theta.
$$
Using that, by assumption, $y'(x)\geq 1$ for all $(1/4)^{1/(n-1)}\leq x\leq (3/4)^{1/(n-1)}$, we obtain
\begin{align*}
\Phi_{B_r}  &\leq  \,C\left(1+\frac{1}{r}\right)|B_r|-\frac{n-1}{r}|\SS^{n-1}|\inf_M \sqrt{\text{det}g_{ij}} \int_{r(\tfrac{1}{4})^{1/(n-1)}}^{r(\tfrac{3}{4})^{1/(n-1)}}\rho^{n-2}\,d\rho+\frac{1}{k^2}\Theta\\
&\leq C\left(1+\frac{1}{r}\right)|B_r|-\frac{C_2}{r^2}|B_r|+\frac{1}{k^2}\Theta,
\end{align*}
where we used the fact that $|B_r|\sim c r^n$ as $r\to 0$. In particular, there exist a continuous function $\lambda(p)> 0$ and, for $p\in M$ fixed, a continuous function $\Theta_p(r)>0$ in $r$, for $0<r<r_0$, such that
$$
\Theta(p, 1/k, t)\leq \Theta_p(r),
$$
and
\begin{equation}\label{e-ie4}
\Phi_{B_r}  \leq \left[C\left(1+\frac{1}{r}\right)-\frac{\lambda}{r^2}\right]|B_r| +\frac{1}{k^2}\Theta_p(r).
\end{equation}
Since, by assumption, $F_g=R_g+t|W_g|_g\geq 0$, given $\nu>0$, there exists a positive radius $0<r_1<r_0$ such that
\begin{equation}\label{e-r1}
\frac{\lambda}{r_1^2}-C\left(1+\frac{1}{r_1}\right)-1\geq\nu \overline{F}_g,
\end{equation}
where $\overline{F}_g:=\left(\int_M F_g\,dV_g\right)/\text{Vol}_g(M)$. Consider $h$ disjoint geodesic balls of radius $r_1$ centered at $p_j\in M$, $B^j_{r_1}(p_j)$, $j=1,\ldots,h$, such that
$$
\sum_{j=1}^h |B^j_{r_1}(p_j)| > \frac{1}{\nu} \text{Vol}_g(M).
$$
This is possible, for $\nu$ sufficiently large. On every ball $B^j$, we choose
$$
k^2:=\sup_{j=1,\ldots,h} \frac{\Theta_{p_j}(r_1)}{|B^j_{r_1}(p_j)| },
$$
if this supremum is larger than $1$, otherwise, we choose $k^2=1$. From \eqref{e-ie4} and \eqref{e-r1}, for all $j=1,\ldots,h$, we get
$$
\Phi_{B^j_{r_1}}\leq -\nu \overline{F}_g |B^j_{r_1}(p_j)| -|B^j_{r_1}(p_j)| +\frac{1}{k^2}\Theta_{p_j}(r_1) \leq -\nu \overline{F}_g |B^j_{r_1}(p_j)|.
$$
Therefore, since $f\equiv \psi\equiv 1$ on $M\setminus B^j$, for all $j=1,\ldots,h$, we obtain
$$
\Phi_M\leq \int_M F_g\,dV_g -\nu \overline{F}_g \sum_{j=1}^{h}|B^j_{r_1}(p_j)| <\overline{F}_g\left(\text{Vol}_g(M)-\nu\sum_{j=1}^{h}|B^j_{r_1}(p_j)|\right)\leq 0.
$$
This concludes the proof of Theorem \ref{t-main}. To be precise, we note that the proof above gives a $C^{2,\alpha}$ metric with negative constant scalar-Weyl curvature $F$. The density of smooth metrics in the space of $C^{2,\alpha}$ metrics (with the $C^{2,\alpha}$ norm) will then give us a smooth metric with negative scalar-Weyl curvature. From Lemma \ref{l-g2} we obtain a smooth metric with constant negative scalar-Weyl curvature.
\begin{flushright}$\square$\end{flushright}

\

\

\begin{ackn}
\noindent The author is member of the "Gruppo Nazionale per le Strutture Algebriche, Geometriche e le loro Applicazioni" (GNSAGA) of the Istituto Nazionale di Alta Matematica (INdAM).
\end{ackn}

\

\

\bibliographystyle{amsplain}

\begin{thebibliography}{10}

\bibitem{aub1} T. Aubin, Sur la courbure scalaire des variétés riemanniennes compactes, C. R. Acad. Sci. Paris {\bf 262} (1966) 130--133.

\bibitem{aub2} T. Aubin, Métriques riemanniennes et courbure. J. Diff. Geom. {\bf 4} (1970), 383--424.

\bibitem{blakal} J. Bland, M. Kalka, Negative scalar curvature metrics on non-compact manifolds,
Trans. A.M.S., {\bf  2} (1989), 433--446.

\bibitem{bro} R. Brooks, A construction of metrics of negative Ricci curvature, J. Diff. Geom. {\bf 29} (1989), 85--94.

\bibitem{catmasbook}  G. Catino, P. Mastrolia,  A Perspective on Canonical Riemannian Metrics, Progress in Mathematics {\bf 336} (2020), Birkh\"auser Basel.

\bibitem{cmmp} G. Catino, P. Mastrolia, D. Monticelli, F. Punzo, Four dimensional closed manifolds admit a
weak harmonic Weyl metric, submitted.

\bibitem{gaoyau}  L.-Z. Gao, S.-T. Yau, The existence of negatively Ricci curved metrics on three mani-
folds, Invent. Math. {\bf 85} (1986), 637--652.

\bibitem{grolaw} M. Gromov, H.B Lawson, The classification of simply connected manifolds of positive scalar curvature, Ann. Math. {\bf 111} (1980), no. 3, 423--434.

\bibitem{gur1} M. Gursky, Four-manifolds with $\delta W^+=0$ and Einstein constants of the sphere, Math. Ann. {\bf 318} (2000), 417--431.

\bibitem{gurleb1} M. J. Gursky, C. LeBrun, Yamabe invariants and spin$^c$ structures, Geom. Funct. Anal. {\bf 8}  (1998), 965--977.

\bibitem{gurleb2} M. J. Gursky and C. LeBrun, On Einstein manifolds of positive sectional curvature, Annals
Global Anal. Geom. {\bf 17} (1999), 315--328.

\bibitem{ito} M. Itoh, The modified Yamabe problem and geometry of modified scalar curvatures, J. Geom. Anal. {\bf 15} (2005), 63--81.

\bibitem{leb} C. LeBrun, Ricci curvature, minimal volumes, and Seiberg-Witten theory, Invent. Math. {\bf 145} (2001), 279--316.

\bibitem{loh} J. Lohkamp, Metrics of negative Ricci curvature, Ann. of Math. {\bf 140} (1994), no. 3, 655--683.

\bibitem{micmoo}  M. J. Micallef and J. D. Moore, Minimal two-spheres and the topology of manifolds with
positive curvature on totally isotropic two-planes, Ann. of Math. {\bf 127} (1988), 199--227.

\bibitem{schyau} R. Schoen, S.-T. Yau, On the structure of manifolds with positive scalar curvature, Manuscripta Math. {\bf 28} (1979), no. 1, 159--183.

\bibitem{ses} H. Seshadri, A note on negative isotropic curvature, Math. Res. Lett. {\bf 11} (2004), 365--375.

\end{thebibliography}

\

\

\parindent=0pt

\end{document}